            \def\amsbb{\use@mathgroup \M@U \symAMSb}
    \setlist{listparindent=\parindent} 
	\renewcommand{\geq}{\geqslant}
	\renewcommand{\leq}{\leqslant}
	\renewcommand{\phi}{\varphi}
	\renewcommand{\sp}{\mathrm{sp}}
	\providecommand{\corollaryname}{Corollary}
	\providecommand{\definitionname}{Definition}
        \providecommand{\definitionpropname}{Definition-Proposition}
	\providecommand{\examplename}{Example}
	\providecommand{\lemmaname}{Lemma}
	\providecommand{\notationname}{Notation}
	\providecommand{\propositionname}{Proposition}
	\providecommand{\remarkname}{Remark}
	\providecommand{\theoremname}{Theorem}
	\providecommand{\setupname}{Setup}
	\providecommand{\conjecturename}{Conjecture}
	\providecommand{\questionname}{Question}
	\providecommand{\claimname}{Claim}
	\theoremstyle{plain}
		\newtheorem{thm}{\protect\theoremname}[section] 
            \newtheorem*{thm*}{\protect\theoremname}
		\newtheorem{prop}[thm]{\protect\propositionname}
		\newtheorem{lem}[thm]{\protect\lemmaname}
	\theoremstyle{definition}
		\newtheorem{defn}[thm]{\protect\definitionname}
		\newtheorem{example}[thm]{\protect\examplename}
		\newtheorem{setup}[thm]{\protect\setupname}
	\theoremstyle{remark}
		\newtheorem{rem}[thm]{\protect\remarkname}
	\numberwithin{figure}{section}
	\numberwithin{equation}{section}
		\newcommand\ackname{Acknowledgements}
		\newenvironment{acknowledgements}{%
			\medskip
			\bgroup
			\list{}{\labelwidth\z@
			\leftmargin3pc \rightmargin\leftmargin
			\listparindent\normalparindent \itemindent\z@
			\parsep\z@ \@plus\p@
			
				}%
				\Small
				\item[\hskip\labelsep\scshape\ackname.]%
			}{%
			\endlist\egroup
			}
\let\amph=& 
	\tikzset{commutative diagrams/.cd, 
		mysymbol/.style = {start anchor=center, end anchor = center, draw = none}}
	\newcommand{\BE}{\amsbb{E}}
	\newcommand{\BQ}{\amsbb{Q}}
    \newcommand{\cat}[1]{\mathcal{#1}}
    \newcommand{\fun}[1]{\mathsf{#1}}
    \newcommand{\adj}[1]{\mathscr{#1}}
    \newcommand{\yoneda}{\amsbb{Y}}
		\newcommand{\Ab}{\operatorname{\mathrm{Ab}}\nolimits}
		\newcommand{\op}{\mathrm{op}}
		\newcommand{\sse}{\subseteq}
            \newcommand{\Iso}[1]{\operatorname{\mathrm{Iso}}\nolimits{#1}}
		\newcommand{\Ker}{\operatorname{Ker}\nolimits}
		\newcommand{\Cok}{\operatorname{Coker}\nolimits}
		\newcommand{\rad}{\operatorname{rad}\nolimits}
		\newcommand{\onto}{\rightarrow\mathrel{\mkern-14mu}\rightarrow}
		\newcommand{\id}[1]{\mathrm{id}_{#1}}
		\newcommand{\indxx}[1]{\operatorname{\mathrm{index}}_{#1}\nolimits}
  \newcommand{\Ind}[1]{\operatorname{\mathrm{Ind}}_{#1}\nolimits}
		\NewDocumentCommand{\newindx}{O{} m}{\operatorname{\mathrm{ind}}_{#2}^{#1}\nolimits}
		\newcommand{\fs}{\mathfrak{s}}
		\newcommand{\rMod}[1]{\operatorname{\mathrm{Mod}}\nolimits{#1}}
            \newcommand{\rProj}[1]{\operatorname{\mathrm{Proj}}\nolimits{#1}}
            \newcommand{\rmod}[1]{\operatorname{\mathrm{mod}}\nolimits{#1}}
            \newcommand{\rproj}[1]{\operatorname{\mathrm{proj}}\nolimits{#1}}
            \newcommand{\lmod}[1]{{#1}\operatorname{\mathrm{mod}}\nolimits}
            \newcommand{\lproj}[1]{{#1}\operatorname{\mathrm{proj}}\nolimits}
            \newcommand{\add}[1]{\operatorname{\mathrm{add}}\nolimits{#1}}
	\newcommand{\lgldim}[1]{\operatorname{l.\!gl.\!dim}\nolimits{#1}}
	\newcommand{\rgldim}[1]{\operatorname{r.\!gl.\!dim}\nolimits{#1}}
	\newcommand{\pd}[2]{\operatorname{pd}_{#1}#2}
	\newcommand{\rpd}[2]{\operatorname{fpd}_{#1}#2}
	\newcommand{\lpd}[2]{\operatorname{fpd}_{#1}#2}
	\newcommand{\deff}{\coloneqq}
	\newcommand\restr[2]{{\left.\kern-\nulldelimiterspace#1
						\right|_{#2}}}
    \renewcommand{\andify}{%
		\nxandlist{\unskip, }{\unskip{} \@@and~}{\unskip \penalty-2 \space \@@and~}} 
    \renewcommand\author@andify{%
  		\nxandlist {\unskip ,\penalty-1 \space\ignorespaces}%
		{\unskip {} \@@and~}%
		{\unskip \penalty-2 \space \@@and~}}
    \let\oldtocsection=\tocsection
    \let\oldtocsubsection=\tocsubsection
    \renewcommand{\tocsection}[2]{\hspace{0em}\oldtocsection{#1}{#2}}
    \renewcommand{\tocsubsection}[2]{\hspace{2em}\oldtocsubsection{#1}{#2}}
\begin{document}

\title{The index in \texorpdfstring{\lowercase{$d$}}{d}-exact  categories}
\author[Fedele]{Francesca Fedele}
\address{
		School of Mathematics\\
		University of Leeds\\
		Leeds LS2 9JT\\
		UK
	}
    \email{f.fedele@leeds.ac.uk}
\author[J{\o{}}rgensen]{Peter J{\o{}}rgensen}
	\address{
		Department of Mathematics\\
		Aarhus University\\
		8000 Aarhus\\
		Denmark
	}
    \email{peter.jorgensen@math.au.dk}

\author[Shah]{Amit Shah}
    \email{a.shah1728@gmail.com}

\date{\today}

\keywords{%
	Contravariantly finite subcategory, 
        $d$-abelian category,
        $d$-cluster tilting,
	$d$-exact category,
        generating subcategory, 
	Grothendieck group,
	index}

\subjclass[2020]{%
Primary 16E20; 
Secondary 18E05, 18E10%
}

\begin{abstract}
Starting from its original definition in module categories with respect to projective modules, the index has played an important role in various aspects of homological algebra, categorification of cluster algebras and $K$-theory. In the last few years, the notion of index has been generalised to several different contexts in (higher) homological algebra, typically with respect to a (higher) cluster-tilting subcategory $\mathcal{X}$ of the relevant ambient category $\mathcal{C}$. The recent tools of extriangulated and higher-exangulated categories have permitted some conditions on the subcategory $\mathcal{X}$ to be relaxed. In this paper, we introduce the index with respect to a generating, contravariantly finite subcategory of a $d$-exact category that has $d$-kernels. We show that our index has the important property of being additive on $d$-exact sequences up to an error term.
\end{abstract}

\maketitle

\setcounter{tocdepth}{1}

\section{Introduction}

Auslander and Reiten first introduced the concept of an index of a module in \cite[Sec.~3]{AuslanderReiten-Modules-determined-by-their-composition-factors}, defined as $[P_0]-[P_1]$ in a suitable Grothendieck group when $P_1 \xrightarrow{} P_0 \xrightarrow{} M \xrightarrow{} 0$ is a minimal projective presentation of a finitely generated module over a finite-dimensional algebra. 

Starting from the above, the idea of an index has then been generalised to many different contexts.
Palu introduced the index with respect to a cluster-tilting subcategory of a triangulated category in \cite[Sec.~2.1]{Palu-Cluster-characters-for-2-Calabi-Yau-triangulated-categories}. Padrol--Palu--Pilaud--Plamondon then showed in \cite[Prop~4.11]{PadrolPaluPilaudPlamondon-Associahedra-for-finite-type-cluster-algebras-and-minimal-relations-between-g-vectors} that this can be recovered using the theory of extriangulated categories, leading to the definition of the index in a triangulated category $\mathcal{C}$ with respect to a contravariantly finite, 
rigid subcategory $\mathcal{X}$
in \cite{JorgensenShah-index}. For $C\in\mathcal{C}$, the index is defined as the class $[C]_\mathcal{X}$ in the Grothendieck group $K_0(\cat{C},\BE_{\cat{X}},\fs_{\cat{X}})$, where $(\cat{C},\BE_{\cat{X}},\fs_{\cat{X}})$ is an extriangulated structure on $\cat{C}$ relative to the triangulated structure.
In \cite{fedele2024index}, taking inspiration from methods used in \cite{CGMZ} by Conde--Gorksy--Marks--Zvonareva, we widened the theory by showing that the assumption on $\mathcal{X}$ being rigid can be dropped, augmenting the class of subcategories that admit a well-defined index.

The present paper uses similar methods to continue the investigation on the index, but in the different direction of higher homological algebra. 
Let $d$ be a positive integer. 
The index with respect to a $d$-cluster tilting subcategory of a triangulated or abelian category has been introduced by J{\o}rgensen in \cite[Def.~3.3]{Jorgensen-Tropical-friezes-and-the-index-in-higher-homological-algebra} and Reid in \cite[Sec.~1]{Reid-Modules-determined-by-their-composition-factors-in-higher-homological-algebra}, respectively. 
Since we focus on the exact category setting here, let us assume $(\cat{C},\adj{E})$ is a skeletally small exact category (see \cite[Def.~2.1]{Buhler-exact-categories}) and that
$\cat{T}\sse\cat{C}$ is a $d$-cluster tilting subcategory in the sense of \cite[Def.~4.13]{Jasso-n-abelian-and-n-exact-categories}. 
Then, for $C\in\cat{C}$, by the dual of \cite[Prop.~4.15]{Jasso-n-abelian-and-n-exact-categories}, there is an $\adj{E}$-acyclic complex 
\begin{equation*}
\label{eqn:T-res-for-C}
\begin{tikzcd}
0 
	\arrow{r}
& T_{d-1}
	\arrow{r}
& T_{d-2}
	\arrow{r}
& \cdots
	\arrow{r}
& T_{0}
	\arrow{r}
& C 
	\arrow{r}
& 0
\end{tikzcd}
\end{equation*}
with $T_{i}\in\cat{T}$ for $0 \leq i \leq d-1$. 
In this case, 
the \emph{index of $C$ with respect to $\cat{T}$} is 
\[
\indxx{\cat{T}}(C) \deff \sum_{i=0}^{d-1} (-1)^{i} [T_{i}]^{\sp}
\]
viewed as an element of the split Grothendieck group $K_{0}^{\sp}(\cat{T})$ of $\cat{T}$.

As an application of \cite[Thm.~4.5]{OS23}, one can verify \cref{thm_dct_abelian} where the hypotheses of \cite[Thm.~4.5]{OS23} are satisfied using arguments analogous to those in \cite[Sec.~6]{OS23}. 
In particular, the isomorphism below suggests that one can interpret the class $[C]_{\cat{T}}$ as the index of $C$ with respect to $\cat{T}$.

\begin{thm}\label{thm_dct_abelian}
(cf.\ \cite[Thm.~6.5]{OS23})
Let $(\cat{C},\adj{E})$ be a skeletally small exact category and $\cat{T}\sse\cat{C}$ a $d$-cluster tilting subcategory. Consider the relative exact category $(\cat{C},\adj{E}_{\cat{T}})$ as obtained via \cref{prop:relative-d-exact-subcategory} (with $d=1$). 
There is an isomorphism
\begin{equation*}
\begin{aligned}[t]
K_{0}(\cat{C},\adj{E}_{\cat{T}}) 		&\overset{\cong}{\longrightarrow}		K_{0}^{\sp}(\cat{T}) \\[5pt]
[C]_{\cat{T}}					&\overset{}{\longmapsto}					\indxx{\cat{T}}(C)  \\[5pt]
[T]_{\cat{T}}					&\overset{}{\longmapsfrom}				[T]_{\cat{T}}^{\sp}.
\end{aligned}
\end{equation*} 
\end{thm}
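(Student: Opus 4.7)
The strategy is to apply \cite[Thm.~4.5]{OS23}, which produces precisely an isomorphism of the asserted form between the Grothendieck group of a relative exact structure and the split Grothendieck group of a suitable generating, contravariantly finite subcategory, provided that subcategory satisfies appropriate resolution and projectivity-type hypotheses. The bulk of the argument is therefore to verify these hypotheses for $\cat{T}\sse(\cat{C},\adj{E}_{\cat{T}})$, running parallel to the checks of \cite[Sec.~6]{OS23}.

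First, I would record the properties of $\cat{T}$ that follow from its being $d$-cluster tilting in $(\cat{C},\adj{E})$ in the sense of \cite[Def.~4.13]{Jasso-n-abelian-and-n-exact-categories}: (i) $\cat{T}$ is both covariantly and contravariantly finite in $\cat{C}$; (ii) $\cat{T}$ is generating, in that every $C\in\cat{C}$ admits an $\adj{E}$-deflation from an object of $\cat{T}$; and (iii) every object of $\cat{C}$ possesses an $\adj{E}$-acyclic $\cat{T}$-resolution of length at most $d-1$, as invoked in the excerpt. I would then pass to the relative exact structure $(\cat{C},\adj{E}_{\cat{T}})$ built in \cref{prop:relative-d-exact-subcategory} (with $d=1$) and check that $\cat{T}$ consists of $\adj{E}_{\cat{T}}$-projectives and remains a generating, contravariantly finite subcategory. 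This supplies the input required by \cite[Thm.~4.5]{OS23}.

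With the hypotheses in place, the theorem yields a mutually inverse pair of homomorphisms. The map $\Psi\colon K_{0}^{\sp}(\cat{T}) \to K_{0}(\cat{C},\adj{E}_{\cat{T}})$ sending $[T]_{\cat{T}}^{\sp} \mapsto [T]_{\cat{T}}$ is essentially forced, since split sequences in $\cat{T}$ are $\adj{E}_{\cat{T}}$-conflations. The map $\Phi\colon K_{0}(\cat{C},\adj{E}_{\cat{T}}) \to K_{0}^{\sp}(\cat{T})$ would act on $[C]_{\cat{T}}$ by splitting an $\adj{E}$-acyclic $\cat{T}$-resolution of $C$ into a sequence of $\adj{E}_{\cat{T}}$-conflations and taking the resulting alternating sum; a direct computation on such a resolution recovers $\sum_{i=0}^{d-1}(-1)^{i}[T_{i}]^{\sp}=\indxx{\cat{T}}(C)$. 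Checking that $\Phi$ and $\Psi$ are mutually inverse then reduces to evaluating each composite on generators.

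The principal technical obstacle is the telescoping step for $\Phi([C]_{\cat{T}})$: while the $\cat{T}$-resolution consists of $\adj{E}$-conflations, one must verify that each intermediate syzygy sequence lies in the smaller class $\adj{E}_{\cat{T}}$ so that the telescoping actually takes place inside $K_{0}(\cat{C},\adj{E}_{\cat{T}})$. The $d$-cluster tilting hypothesis is precisely what propagates the compatible $\cat{T}$-resolution condition along syzygies, exactly as in the arguments of \cite[Sec.~6]{OS23} that our proof would mimic.
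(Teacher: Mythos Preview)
Your proposal is correct and follows exactly the approach the paper indicates: the paper does not give a self-contained proof of this theorem but instead remarks that it is an application of \cite[Thm.~4.5]{OS23} with the hypotheses verified by arguments analogous to those in \cite[Sec.~6]{OS23}, which is precisely the route you outline. Your identification of the key technical point---that the intermediate syzygy conflations in the $\cat{T}$-resolution lie in $\adj{E}_{\cat{T}}$, enabling the telescoping in $K_{0}(\cat{C},\adj{E}_{\cat{T}})$---is exactly the content handled by the arguments of \cite[Sec.~6]{OS23} that the paper refers to.
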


The higher-dimensional version of an exact category is called a \emph{$d$-exact category} and was introduced by Jasso 
\cite[Def.~4.2]{Jasso-n-abelian-and-n-exact-categories} (or see \cref{def:d-exact-category}). 
Let $(\cat{C},\adj{E})$ be a skeletally small, $d$-exact category and suppose $\cat{X}\sse\cat{C}$ is a full subcategory. 
Motivated by \cref{thm_dct_abelian}, we define 
the \emph{index of $C\in\cat{C}$ with respect to $\cat{X}$} to be the class $[C]_\cat{X}$ in $K_0(\cat{C},\adj{E}_\cat{X})$, where $(\cat{C},\adj{E}_\cat{X})$ is  the relative $d$-exact category defined as in \cref{prop:relative-d-exact-subcategory} and its Grothendieck group as in \cref{def:grothendieck-group}.

One of the key properties of the classical index is that it is additive up to a well-behaved error term on triangles \cite[Prop.~2.2]{Palu-Cluster-characters-for-2-Calabi-Yau-triangulated-categories}, $(d+2)$-angles \cite[Thm.~C]{Jorgensen-Tropical-friezes-and-the-index-in-higher-homological-algebra}, and short exact sequences \cite[Thm.~C]{Reid-Modules-determined-by-their-composition-factors-in-higher-homological-algebra}.
Importantly, such additivity permits use of the index to build cluster characters \cite{Palu-Cluster-characters-for-2-Calabi-Yau-triangulated-categories} and tropical friezes \cite{Guo,Jorgensen-Tropical-friezes-and-the-index-in-higher-homological-algebra}.
Using methods similar to those in \cite{CGMZ} and \cite{fedele2024index}, in \cref{sec:main-results} we prove our main result, showing that our index is also additive on $d$-exact sequences in $\adj{E}$ up to an error term.

\begin{thm}\label{theoremA}
Suppose $(\cat{C},\adj{E})$ is a skeletally small, idempotent complete, $d$-exact category that has $d$-kernels. 
Let $\cat{X}\sse\cat{C}$ be a full, contravariantly finite, additive subcategory that is closed under direct summands and is also generating, see Definition~\ref{def:generating}. 
Then there is a group homomorphism 
$\theta_{\cat{X}}\colon K_{0}(\rmod{\cat{X}}) \to K_{0}(\cat{C},\adj{E}_{\cat{X}})$, satisfying: if 
\[
\begin{tikzcd}
A_{d+1}
    \arrow[tail]{r}{\partial^{A}_{d+1}}
&A_{d}
    \arrow{r}{\partial^{A}_{d}}
&\cdots
    \arrow{r}{\partial^{A}_{3}}
&A_{2}
    \arrow{r}{\partial^{A}_{2}}
&A_{1}
    \arrow[two heads]{r}{\partial^{A}_{1}}
&A_{0}
\end{tikzcd}
\]
is a $d$-exact sequence in $\adj{E}$, then
$\theta_{\cat{X}}([\Cok\left(\restr{\cat{C}(-,\partial^{A}_{1})}{\cat{X}}\right)])
    = \sum_{i=0}^{d+1}(-1)^{i}[A_{i}]_{\cat{X}}$.
\end{thm}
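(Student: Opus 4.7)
The approach adapts the strategy from \cite{CGMZ} and \cite{fedele2024index}, where the case $d=1$ is treated, to the present $d$-exact setting. First I construct $\theta_{\cat{X}}$ on objects: since $\cat{X}$ is contravariantly finite and additive in $\cat{C}$, every $M \in \rmod{\cat{X}}$ admits a projective presentation $\cat{X}(-, X_{1}) \to \cat{X}(-, X_{0}) \to M \to 0$ with $X_{0}, X_{1} \in \cat{X}$, and I set $\theta_{\cat{X}}([M]) \deff [X_{0}]_{\cat{X}} - [X_{1}]_{\cat{X}}$ in $K_{0}(\cat{C}, \adj{E}_{\cat{X}})$.

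Well-definedness and additivity then follow from standard arguments: a Schanuel-style comparison shows the assignment is independent of the chosen presentation (the discrepancy between two presentations involves only split $d$-exact sequences, which belong to $\adj{E}_{\cat{X}}$ and hence vanish in $K_{0}(\cat{C}, \adj{E}_{\cat{X}})$), and the Horseshoe Lemma applied to a short exact sequence in $\rmod{\cat{X}}$ produces compatible presentations exhibiting additivity of $\theta_{\cat{X}}$. This shows $\theta_{\cat{X}}$ descends to a group homomorphism on $K_{0}(\rmod{\cat{X}})$.

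To verify the formula, I apply $\cat{C}(-, -)|_{\cat{X}}$ to the given $d$-exact sequence to obtain
\[
\cat{X}(-, A_{d+1}) \to \cdots \to \cat{X}(-, A_{1}) \to \cat{X}(-, A_{0}) \to M \to 0,
\]
which is exact using the $d$-exact axioms together with the generating and contravariantly finite hypotheses. Since the $A_{i}$ need not lie in $\cat{X}$, I replace this by a genuine projective resolution by choosing $\cat{X}$-approximations of each $A_{i}$ (taken as admissible epimorphisms thanks to the generating hypothesis) and iterating using the existence of $d$-kernels. Organising these choices into a bicomplex whose totalisation resolves $M$, each column is by construction a $d$-exact sequence in $\adj{E}_{\cat{X}}$ and hence contributes trivially to $K_{0}(\cat{C}, \adj{E}_{\cat{X}})$, so summing alternating contributions row-by-row yields exactly $\sum_{i=0}^{d+1}(-1)^{i}[A_{i}]_{\cat{X}}$.

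The hard part is this last step: one must construct the bicomplex of $\cat{X}$-approximations using only contravariant finiteness, the generating hypothesis and the existence of $d$-kernels (without stronger conditions such as rigidity or cluster-tilting), and verify that each column really is a $d$-exact sequence in $\adj{E}_{\cat{X}}$. The sign bookkeeping also becomes more delicate when $d > 1$, so care is needed to ensure the alternating sum collapses correctly.
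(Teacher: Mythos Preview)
Your definition $\theta_{\cat{X}}([M]) \deff [X_{0}]_{\cat{X}} - [X_{1}]_{\cat{X}}$ from a two-term presentation in $\rmod{\cat{X}}$ is \emph{not} well-defined, and the Schanuel argument you sketch does not repair it. Schanuel's lemma applied to two presentations
\[
0 \to K \to \cat{X}(-,X_{0}) \to M \to 0,
\qquad
0 \to K' \to \cat{X}(-,X_{0}') \to M \to 0
\]
yields $K \oplus \cat{X}(-,X_{0}') \cong K' \oplus \cat{X}(-,X_{0})$, but $K,K'$ are arbitrary objects of $\rmod{\cat{X}}$, not representables; nothing forces $[X_{1}]_{\cat{X}}$ and $[X_{1}']_{\cat{X}}$ to be related. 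The discrepancy is not a ``split $d$-exact sequence'' in $\cat{C}$ at all. Concretely, take $d=1$, $\cat{C}=\rmod{\Lambda}$ for $\Lambda=k[x]/(x^{2})$ with its abelian exact structure, and $\cat{X}=\cat{C}$ (so all the hypotheses hold and $\adj{E}_{\cat{X}}=\adj{E}$). For $M=\Cok\bigl(\cat{C}(-,x)\colon \cat{C}(-,\Lambda)\to\cat{C}(-,\Lambda)\bigr)$, the presentation $\Lambda\xrightarrow{x}\Lambda$ gives $[\Lambda]-[\Lambda]=0$, while the presentation $\Lambda^{2}\xrightarrow{(x,0)}\Lambda$ gives $[\Lambda]-[\Lambda^{2}]=-[\Lambda]\neq 0$ in $K_{0}(\cat{C})\cong\BZ$. (The paper's $\theta_{\cat{X}}$ returns $[S]$ here, agreeing with neither.)

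The paper avoids this by a genuinely different route: it first builds $\theta_{\cat{C}}\colon K_{0}(\rmod{\cat{C}})\to K_{0}^{\sp}(\cat{C})$ using that, because $\cat{C}$ has $d$-kernels, \emph{every} $\fun{M}\in\rmod{\cat{C}}$ has a projective resolution of length $\leq d+1$, so the full Schanuel comparison \emph{does} go through in $\rmod{\cat{C}}$. Only then does it pass to $\rmod{\cat{X}}$, not by re-resolving, but by invoking the localisation exact sequence
\[
K_{0}(\Ker\restr{(-)}{\cat{X}}) \to K_{0}(\rmod{\cat{C}}) \to K_{0}(\rmod{\cat{X}}) \to 0
\]
and checking that $\pi_{\cat{X}}\theta_{\cat{C}}$ kills the image of $K_{0}(\Ker\restr{(-)}{\cat{X}})$; this last step is where the $\adj{E}_{\cat{X}}$-relations (and implicitly the generating hypothesis) enter. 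This bypasses your bicomplex-of-approximations construction entirely and makes the formula for $\adj{E}$-admissible sequences immediate from the definition of $\theta_{\cat{C}}$. The point you are missing is that $\rmod{\cat{X}}$ typically has infinite global dimension, whereas $\rmod{\cat{C}}$ has global dimension $\leq d+1$; working upstairs in $\rmod{\cat{C}}$ is what makes the alternating-sum definition sound.
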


Moreover, we note that the morphism $\theta_\cat{X}$ in the above result is unique with respect to a stronger property on left $d$-exact sequences in $\cat{C}$.
\begin{prop}\label{prop_uniqueness}
    In the situation of Theorem~\ref{theoremA}, let
    \[
\begin{tikzcd}
A_{d+1}
    \arrow[tail]{r}{\partial^{A}_{d+1}}
&A_{d}
    \arrow{r}{\partial^{A}_{d}}
&\cdots
    \arrow{r}{\partial^{A}_{3}}
&A_{2}
    \arrow{r}{\partial^{A}_{2}}
&A_{1}
    \arrow{r}{\partial^{A}_{1}}
&A_{0}
\end{tikzcd}
\]
be a left $d$-exact sequence in $\cat{C}$. Then $\theta_{\cat{X}}([\Cok\left(\restr{\cat{C}(-,\partial^{A}_{1})}{\cat{X}}\right)])
    = \sum_{i=0}^{d+1}(-1)^{i}[A_{i}]_{\cat{X}}$ and $\theta_\cat{X}$ is unique with respect to this property.
\end{prop}

We observe here that \cite[Thm.~C]{Reid-Modules-determined-by-their-composition-factors-in-higher-homological-algebra} is a special case of \cref{theoremA}. Indeed, in the setup of \cite{Reid-Modules-determined-by-their-composition-factors-in-higher-homological-algebra}, one has a $d$-cluster tilting subcategory $\cat{T} = \add(T)$ of a module category $\cat{C}$, so one may 
choose $\cat{X} = \cat{T}$ in \cref{theoremA} (see \cref{example:d-CT}). 
Then, for a short exact sequence
$
\delta\colon\;
\begin{tikzcd}[column sep=0.5cm, cramped]
A_{2}
    \arrow[tail]{r}{\partial^{A}_{2}}
&A_{1}
    \arrow[two heads]{r}{\partial^{A}_{1}}
&A_{0}
\end{tikzcd}
$
in $\cat{C}$, 
our term $\theta_{\cat{T}}([\Cok\left(\restr{\cat{C}(-,\partial^{A}_{1})}{\cat{T}}\right)])$
specialises to 
the term $\kappa^{-1}([\delta^{*}(T)]_{\Lambda})$ of \cite[Thm.~C]{Reid-Modules-determined-by-their-composition-factors-in-higher-homological-algebra}, using that 
our index $[-]_{\cat{T}}$ corresponds to Reid's $\Ind{\cat{T}}(-)$ (which is just $\indxx{\cat{T}}(-)$ in our notation above)  
via the isomorphism given in \cref{thm_dct_abelian}.

A natural choice of $(\cat{C},\adj{E})$ as in \cref{theoremA} is a $d$-abelian category. However, there are many examples of such $(\cat{C},\adj{E})$ that are not $d$-abelian. For instance, any $d$-torsion class of a Krull-Schmidt $d$-abelian category that embeds in a finite length abelian category can serve as $(\cat{C},\adj{E})$ (see \cref{example:d-torsion-free-have-d-kernels}). 
We study $d$-exact categories that have $d$-kernels in more detail in Appendix~\ref{appendix}, where we also give several examples.

\section{Background and notation}\label{background}

In this section, we present a summary on the module category of an additive category and recall some results we will apply later. 
Let $\cat{C}$ denote an additive category. 
We say that $\cat{C}$ \textit{has weak kernels} if, for every morphism $b\colon  B\rightarrow C$ in $\cat{C}$, there is a  morphism $a\colon  A\rightarrow B$ in $\cat{C}$ inducing an exact sequence as follows. 
\[
\begin{tikzcd}[column sep=1.5cm]
\cat{C}(-,A)
    \arrow{r}{\cat{C}(-,a)}
& \cat{C}(-,B)
    \arrow{r}{\cat{C}(-,b)}
& \cat{C}(-,C)
\end{tikzcd}
\]

We give a brief overview of the category of $\cat{C}$-modules and its subcategory of finitely presented $\cat{C}$-modules.

\begin{defn}\label{defn_modC}
    Suppose $\cat{C}$ is a skeletally small, idempotent complete, additive category that has weak kernels. 
    Let $\Ab$ denote the category of all abelian groups.
    \begin{enumerate}
    
    \item\label{ModC} The \textit{category of $\cat{C}$-modules}, denoted by $\rMod{\cat{C}}$, is the abelian category of all (covariant) additive functors $\fun{M}\colon \cat{C}^{\op}\rightarrow \Ab$.

    \item\label{modC} A $\cat{C}$-module $\fun{M}\colon \cat{C}^{\op}\rightarrow \Ab$ is \textit{finitely presented} if there is an exact sequence 
    \[
    \begin{tikzcd}
        \cat{C}(-,A)
            \arrow{r}
        &\cat{C}(-,B)
            \arrow{r}
        &\fun{M}
            \arrow{r}
        &0
    \end{tikzcd}
    \]
    in $\rMod{\cat{C}}$ 
    for some objects $A,B\in\cat{C}$ (see \cite[p.~155]{Beligiannis-on-the-freyd-cats-of-additive-cats}). We denote  by $\rmod{\cat{C}}$ the full subcategory of $\rMod{\cat{C}}$ consisting of the finitely presented $\cat{C}$-modules.
   Under the assumptions on $\cat{C}$, we have that $\rmod{\cat{C}}$ is abelian and the inclusion functor $\rmod{\cat{C}} \to \rMod{\cat{C}}$ is exact (see e.g.\  \cite[Sec.~III.2]{Auslander-representation-dimension-of-artin-algebras-QMC}). 
   
   \item\label{Yoneda} Under the assumptions on $\cat{C}$, the \textit{Yoneda embedding} $\yoneda \colon  \cat{C}\rightarrow \rmod{\cat{C}}$ given by 
   $A\mapsto \yoneda A\deff\cat{C}(-,A)$ 
   and 
   $(A\overset{a}{\to}B)\mapsto \yoneda a \deff \cat{C}(-,a)$ 
   is fully faithful, additive, and its values are, up to isomorphism, all the projective objects in $\rmod{\cat{C}}$ (and in $\rMod{\cat{C}}$). 
    See \cite[Prop.~2.2]{Auslander-Rep-theory-of-Artin-algebras-I}.
    \end{enumerate}
\end{defn}

We will further assume that $\cat{C}$ has a nice enough subcategory. We recall the definitions of the needed properties.

\begin{defn}\label{def:additive-approxs}
    By an \textit{additive subcategory} $\cat{X}$ of an additive category $\cat{C}$ we mean a full subcategory that is closed under isomorphisms, finite direct sums and direct summands.

    Let $\cat{X}\subseteq \cat{C}$ be a full subcategory and $C$ an object in $\cat{C}$. A morphism $x\colon  X\rightarrow C$ with $X\in\cat{X}$ is called a \textit{right $\cat{X}$-approximation of $C$} if, for each $X'\in\cat{X}$, the induced morphism $\cat{C}(X',x)\colon \cat{C}(X',X)\onto \cat{C}(X',C)$ is surjective. If each $C\in\cat{C}$ has a right $\cat{X}$-approximation, then $\cat{X}$ is said to be \textit{contravariantly finite} (see \cite[pp.~113--114]{auslander-reiten-contravariantly}).
\end{defn}

For the rest of this section, we work in the following setup.

\begin{setup}\label{setup:section2}
    Let $\cat{C}$ be a skeletally small, idempotent complete, additive category that has weak kernels. In addition, suppose  $\cat{X}\subseteq \cat{C}$ is a contravariantly finite, additive subcategory.
\end{setup}

We recall the connection between the categories of $\cat{X}$-modules and of $\cat{C}$-modules.

\begin{rem}
     First note that the assumptions on $\cat{X}$ ensure it is itself a skeletally small, idempotent complete, additive category with weak kernels. Hence, \cref{defn_modC} holds replacing $\cat{C}$ by $\cat{X}$.
     Moreover, there is a triplet of adjoint functors
    \begin{equation}\label{eqn_adjoints}
    \begin{tikzcd}[column sep=3cm]
        \rMod{\cat{C}} 
            \arrow{r}{\restr{(-)}{\cat{X}}}
        & \rMod{\cat{X}},
            \arrow[bend right]{l}[swap]{\adj{L}}
            \arrow[bend left]{l}{\adj{R}}
    \end{tikzcd}
    \end{equation}
    where the left adjoint $\adj{L}$ and the right adjoint $\adj{R}$ of the restriction functor $\restr{(-)}{\cat{X}}$ are fully faithful functors. Furthermore, $\adj{L}$ is right exact and $\adj{R}$ is left exact, while $\restr{(-)}{\cat{X}}$ is exact. See \cite[Props.~3.1 and 3.4]{Auslander-Rep-theory-of-Artin-algebras-I}. 
\end{rem}

We conclude this section by recalling the following result that we will use in the proof of the main result in \cref{sec:defining-thetaX}. In the following, let $\iota$ denote the inclusion of $\Ker \restr{(-)}{\cat{X}}$ into $\rmod{\cat{C}}$. Then the induced homomorphism on the Grothendieck groups is
\[
    \begin{tikzcd}[column sep=1.8cm]
        K_0(\Ker \restr{(-)}{\cat{X}})
            \arrow{r}{K_{0}(\iota)}
        &K_0(\rmod{\cat{C}}),
    \end{tikzcd}
    \]
    where $K_{0}(\iota)([\fun{M}]) = [\fun{M}]$ for $\fun{M}\in \Ker \restr{(-)}{\cat{X}}$.

\begin{lem}[{\cite[Lem.~2.3 and Prop.~2.6]{fedele2024index}}]\label{lemmas_FJS}
    The exact functor $\restr{(-)}{\cat{X}}\colon  \rMod{\cat{C}}\rightarrow \rMod{\cat{X}}$ restricts to an exact functor on finitely presented modules $\restr{(-)}{\cat{X}}\colon \rmod{\cat{C}}\rightarrow \rmod{\cat{X}}$.
    Moreover, the latter induces an exact sequence of Grothendieck groups
    \[
    \begin{tikzcd}[column sep=1.8cm]
        K_0(\Ker \restr{(-)}{\cat{X}})
            \arrow{r}{K_{0}(\iota)}
        &K_0(\rmod{\cat{C}})
            \arrow{r}{K_0(\restr{(-)}{\cat{X}})} 
        &K_0(\rmod{\cat{X}})
            \arrow{r}
        &0.
    \end{tikzcd}
    \]
    Consequently, the subgroup $\Ker K_0(\restr{(-)}{\cat{X}})$ of $K_0(\rmod{\cat{C}})$ is generated by 
    \begin{align*}
        \Set{ [\fun{M}] | \fun{M}\in\rmod{\cat{C}} \text{ and } \restr{\fun{M}}{\cat{X}}=0 }.
    \end{align*}
\end{lem}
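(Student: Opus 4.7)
The plan is to split the claim into three tasks: (i) restriction preserves finite presentability; (ii) the three-term sequence of Grothendieck groups is exact; and (iii) the description of $\Ker K_{0}(\restr{(-)}{\cat{X}})$. Consequence (iii) will then be immediate from (ii).

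For (i), the crux is that every restricted representable $\restr{\cat{C}(-,C)}{\cat{X}}$ is finitely presented over $\cat{X}$. I would pick a right $\cat{X}$-approximation $x_{0}\colon X_{0}\to C$, take a weak kernel $k\colon K\to X_{0}$ of $x_{0}$ in $\cat{C}$ (using that $\cat{C}$ has weak kernels), and then a right $\cat{X}$-approximation $x_{1}\colon X_{1}\to K$. A short diagram chase, using the defining properties of $x_{0}$, $k$, and $x_{1}$, shows that
\[
\begin{tikzcd}
\cat{X}(-,X_{1}) \arrow{r} & \cat{X}(-,X_{0}) \arrow{r} & \restr{\cat{C}(-,C)}{\cat{X}} \arrow{r} & 0
\end{tikzcd}
\]
is exact. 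For a general $\fun{M}\in\rmod{\cat{C}}$ with presentation $\cat{C}(-,A)\to\cat{C}(-,B)\to\fun{M}\to 0$, exactness of $\restr{(-)}{\cat{X}}$ on $\rMod{\cat{C}}$ combined with closure of $\rmod{\cat{X}}$ under cokernels in $\rMod{\cat{X}}$ yields $\restr{\fun{M}}{\cat{X}}\in\rmod{\cat{X}}$. Exactness of the restricted functor $\rmod{\cat{C}}\to\rmod{\cat{X}}$ is then inherited from that of the unrestricted one.

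For (ii), surjectivity at $K_{0}(\rmod{\cat{X}})$ is the observation that any $\fun{N}\in\rmod{\cat{X}}$ equals $\restr{\adj{L}(\fun{N})}{\cat{X}}$, with $\adj{L}(\fun{N})\in\rmod{\cat{C}}$: indeed, $\adj{L}$ sends representables to representables and is right exact, so applying it to a finite presentation of $\fun{N}$ produces a finite presentation of $\adj{L}(\fun{N})$. The inclusion $\im K_{0}(\iota)\sse\Ker K_{0}(\restr{(-)}{\cat{X}})$ is immediate. For the reverse inclusion, I would analyse the counit $\eta_{\fun{M}}\colon\adj{L}(\restr{\fun{M}}{\cat{X}})\to\fun{M}$. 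Since $\adj{L}$ is fully faithful, $\restr{\eta_{\fun{M}}}{\cat{X}}$ is an isomorphism, and by exactness of restriction both $\Ker\eta_{\fun{M}}$ and $\Cok\eta_{\fun{M}}$ lie in $\Ker\restr{(-)}{\cat{X}}$. This yields the congruence
\[
[\fun{M}] - [\adj{L}(\restr{\fun{M}}{\cat{X}})] \in \im K_{0}(\iota) \quad \text{in } K_{0}(\rmod{\cat{C}}).
\]
For $\alpha=\sum n_{i}[\fun{M}_{i}]\in\Ker K_{0}(\restr{(-)}{\cat{X}})$, this reduces the problem to showing $\sum n_{i}[\adj{L}(\restr{\fun{M}_{i}}{\cat{X}})]\in\im K_{0}(\iota)$ whenever $\sum n_{i}[\restr{\fun{M}_{i}}{\cat{X}}]=0$ in $K_{0}(\rmod{\cat{X}})$. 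The latter vanishing is witnessed by finitely many short exact sequences in $\rmod{\cat{X}}$; lifting each such sequence $0\to\fun{N}_{1}\to\fun{N}_{2}\to\fun{N}_{3}\to 0$ through $\adj{L}$ produces a right exact sequence whose leftmost kernel term restricts to zero under $\restr{(-)}{\cat{X}}$ by exactness of restriction, hence lies in $\Ker\restr{(-)}{\cat{X}}$. This absorbs the defect of $\adj{L}$ on exact sequences into $\im K_{0}(\iota)$ and completes the proof of (ii).

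Consequence (iii) is then immediate: $\Ker K_{0}(\restr{(-)}{\cat{X}})$ equals $\im K_{0}(\iota)$, which is generated as an abelian group by classes $[\fun{M}]$ with $\fun{M}\in\Ker\restr{(-)}{\cat{X}}$. The step I expect to be the main obstacle is the lifting argument in (ii): one must carefully balance right-exactness of $\adj{L}$, exactness of restriction, and full faithfulness of $\adj{L}$ to measure precisely how $\adj{L}$ fails to be exact, and then verify that the resulting defects are absorbed by $\Ker\restr{(-)}{\cat{X}}$.
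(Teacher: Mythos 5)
Your proposal is correct. Note that the paper itself offers no proof of this lemma: it is imported wholesale from \cite[Lem.~2.3 and Prop.~2.6]{fedele2024index}, so what you have written is a self-contained substitute rather than a variant of an argument in the text. Part (i) is the standard Auslander-style computation and is airtight: exactness of $\cat{X}(-,X_{1})\to\cat{X}(-,X_{0})\to\restr{\cat{C}(-,C)}{\cat{X}}\to 0$ follows from the approximation/weak-kernel/approximation construction exactly as you indicate, and the general case follows since restriction is exact and cokernels of maps of finitely presented modules are finitely presented. In part (ii), the two key points both check out: full faithfulness of $\adj{L}$ makes the unit invertible, hence $\restr{\eta_{\fun{M}}}{\cat{X}}$ invertible by the triangle identity, so the four-term exact sequence $0\to\Ker\eta_{\fun{M}}\to\adj{L}(\restr{\fun{M}}{\cat{X}})\to\fun{M}\to\Cok\eta_{\fun{M}}\to 0$ gives $[\fun{M}]-[\adj{L}(\restr{\fun{M}}{\cat{X}})]\in\im K_{0}(\iota)$; and for a short exact sequence $0\to\fun{N}_{1}\to\fun{N}_{2}\to\fun{N}_{3}\to 0$ in $\rmod{\cat{X}}$, right exactness of $\adj{L}$ together with exactness of restriction and naturality of the unit shows that $\fun{K}\deff\Ker(\adj{L}\fun{N}_{1}\to\adj{L}\fun{N}_{2})$ restricts to zero, whence $[\adj{L}\fun{N}_{2}]-[\adj{L}\fun{N}_{1}]-[\adj{L}\fun{N}_{3}]=-[\fun{K}]\in\im K_{0}(\iota)$ and the assignment $[\fun{N}]\mapsto[\adj{L}\fun{N}]+\im K_{0}(\iota)$ descends to $K_{0}(\rmod{\cat{X}})$, which is precisely the well-definedness needed to finish your reduction. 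The cited source reaches the exact sequence by identifying $\rmod{\cat{X}}$ with the Serre quotient of $\rmod{\cat{C}}$ by $\Ker\restr{(-)}{\cat{X}}$ and invoking the localisation sequence for Grothendieck groups; your route trades that machinery for hands-on bookkeeping of the failure of exactness of $\adj{L}$, and is the more elementary of the two.
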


\section{\texorpdfstring{$d$}{d}-exact categories}
\label{sec:d-exact-categories}

Let $d\geq 1$ be an integer. Abelian and exact categories are central in homological algebra. Their $d$-dimensional analogues have been introduced in the development of higher homological algebra, and they are known as $d$-abelian and $d$-exact categories \cite{Jasso-n-abelian-and-n-exact-categories}. Just like in the classical theory, each $d$-abelian category has a canonical $d$-exact structure \cite[Thm.~4.4]{Jasso-n-abelian-and-n-exact-categories} and we focus on $d$-exact categories in this article. 
Throughout this section, let $\cat{C}$ denote an additive category.

\begin{defn}\label{def:d-kernels}
\cite[Def.~2.2]{Jasso-n-abelian-and-n-exact-categories} 
Suppose $\partial^{B}_{1}\colon B_{1} \to B_{0}$ is a morphism in $\cat{C}$. A \emph{$d$-kernel} in $\cat{C}$ of $\partial^{B}_{1}$ is a sequence
\[
(\partial^{B}_{d+1},\ldots,\partial^{B}_{2}) \colon\;\;
\begin{tikzcd}
B_{d+1}
    \arrow{r}{\partial^{B}_{d+1}}
&B_{d}
    \arrow{r}{\partial^{B}_{d}}
&\cdots
    \arrow{r}{\partial^{B}_{3}}
&B_{2}
    \arrow{r}{\partial^{B}_{2}}
&B_{1}
\end{tikzcd}
\]
of $d$ composable morphisms in $\cat{C}$, such that the induced sequence 
\[
\begin{tikzcd}[column sep=1.3cm]
0
    \arrow{r}
&\yoneda B_{d+1}
    \arrow{r}{\yoneda \partial^{B}_{d+1}}
&\yoneda B_{d}
    \arrow{r}{\yoneda \partial^{B}_{d}}
&\cdots
    \arrow{r}{\yoneda \partial^{B}_{2}}
&\yoneda B_{1}
    \arrow{r}{\yoneda \partial^{B}_{1}}
&\yoneda B_{0}
\end{tikzcd}
\]
in $\rMod{\cat{C}}$ is exact. 
In this case, we say that 
\begin{equation}\label{eqn:left-d-exact}
\begin{tikzcd}
B_{d+1}
    \arrow[tail]{r}{\partial^{B}_{d+1}}
&B_{d}
    \arrow{r}{\partial^{B}_{d}}
&\cdots
    \arrow{r}{\partial^{B}_{3}}
&B_{2}
    \arrow{r}{\partial^{B}_{2}}
&B_{1}
    \arrow{r}{\partial^{B}_{1}}
&B_{0}
\end{tikzcd}
\end{equation}
is a \emph{left $d$-exact sequence}.

One defines a \emph{$d$-cokernel} and a \emph{right $d$-exact sequence} dually.
\end{defn}

For a left $d$-exact sequence \eqref{eqn:left-d-exact}, it follows from the definition that $\partial^{B}_{d+1}$ is a monomorphism in $\cat{C}$. We note that left $d$-exact sequences were called `$d$-kernel sequences' in \cite[Def.~4.9]{HerschendLiuNakaoka-n-exangulated-categories-I-definitions-and-fundamental-properties}.

\begin{defn}\label{def:has-d-kernels}
We say that $\cat{C}$ \emph{has $d$-kernels} if, for each morphism $\partial^{B}_{1}\colon B_{1}\to B_{0}$ in $\cat{C}$, there is a left $d$-exact sequence of the form \eqref{eqn:left-d-exact}.
\end{defn}

\begin{defn}\label{def:d-exact-sequence}
\cite[Def.~2.4]{Jasso-n-abelian-and-n-exact-categories} 
A sequence
\begin{equation}\label{eqn:d-exact-sequence}
\begin{tikzcd}
B_{d+1}
    \arrow[tail]{r}{\partial^{B}_{d+1}}
&B_{d}
    \arrow{r}{\partial^{B}_{d}}
&\cdots
    \arrow{r}{\partial^{B}_{3}}
&B_{2}
    \arrow{r}{\partial^{B}_{2}}
&B_{1}
    \arrow[two heads]{r}{\partial^{B}_{1}}
&B_{0}
\end{tikzcd}
\end{equation}
of morphisms in $\cat{C}$ is called \emph{$d$-exact} if it is both left $d$-exact and right $d$-exact, i.e.\ 
$(\partial^{B}_{d+1},\ldots,\partial^{B}_{2})$ is a $d$-kernel of $\partial^{B}_{1}$ and $(\partial^{B}_{d},\ldots,\partial^{B}_{1})$ is a $d$-cokernel of $\partial^{B}_{d+1}$. 
We denote the complex \eqref{eqn:d-exact-sequence} by $B_{\bullet}$.
\end{defn}

\begin{rem}
We emphasise here that the definition of a left $d$-exact sequence is purely about intrinsic properties of $\cat{C}$; indeed, for the left $d$-exact sequence \eqref{eqn:left-d-exact} to be left exact means precisely that the first $d$ morphisms $(\partial^{B}_{d+1},\ldots,\partial^{B}_{2}) $ constitute a $d$-kernel of the rightmost morphism $\partial^{B}_{1}$, and nothing more. Similarly for a (right) $d$-exact sequence. 
In particular, there is no reference to a $d$-exact structure on $\cat{C}$ (cf.\ \cref{def:d-exact-category} below).
\end{rem}

We now recall the definition of a $d$-exact category. However, we omit the details we will not use in the sequel. For complete definitions, see \cite[Sec.~4]{Jasso-n-abelian-and-n-exact-categories} or \cite[Def.~4.19]{HerschendLiuNakaoka-n-exangulated-categories-I-definitions-and-fundamental-properties}.

\begin{defn}\label{def:d-exact-category}
\cite[Def.~4.2]{Jasso-n-abelian-and-n-exact-categories}
Suppose $\adj{E}$ is a class of $d$-exact sequences in the additive category $\cat{C}$. If $B_{\bullet}\in\adj{E}$, then $B_{\bullet}$ is called \emph{$\adj{E}$-admissible}, the morphism $\partial^{B}_{d+1}$ is called an \emph{$\adj{E}$-admissible inflation} and $\partial^{B}_{1}$ an \emph{$\adj{E}$-admissible deflation}. The pair $(\cat{C},\adj{E})$ is a \emph{$d$-exact} category if the following axioms are satisfied.
\begin{enumerate}[label=\textup{(E\arabic*)}]
\setcounter{enumi}{-1}
    \item[(EC)]  The class $\adj{E}$ is closed under weak isomorphisms (see \cite[Def.~4.1]{Jasso-n-abelian-and-n-exact-categories}).
    \item The zero complex $0\to \cdots \to 0$ belongs to $\adj{E}$.
    \item The class of $\adj{E}$-admissible inflations is closed under composition.
    \item[(E$1^{\op}$)] Dually, the class of $\adj{E}$-admissible deflations is closed under composition. 
    \item Dual of (E$2^{\op}$) below.
    \item[(E$2^{\op}$)] For each $B_{\bullet}\in\adj{E}$ and each morphism $g\colon C_{0} \to B_{0}$, there is a $d$-pullback diagram (see \cite[dual of Def.~2.11]{Jasso-n-abelian-and-n-exact-categories})
    \begin{equation}\label{eqn:d-pullback-diagram}
    \begin{tikzcd}
    (C_{d+1}
        \arrow[tail]{r}{\partial^{C}_{d+1}}
        \arrow[equals]{d}{}
    &)\;C_{d}
        \arrow{r}{\partial^{C}_{d}}
        \arrow{d}{f_{d}}
    &\cdots
        \arrow{r}
    &C_{1}
        \arrow[two heads]{r}{\partial^{C}_{1}}
        \arrow{d}{f_{1}}
    &C_{0}
        \arrow{d}{g}
    \\
    (B_{d+1}
        \arrow[tail]{r}{\partial^{B}_{d+1}}
    &)\;B_{d}
        \arrow{r}{\partial^{B}_{d}}
    &\cdots
        \arrow{r}
    &B_{1}
        \arrow[two heads]{r}{\partial^{B}_{1}}
    &B_{0},
    \end{tikzcd}
    \end{equation}
    such that $C_{\bullet}\in\adj{E}$.
\end{enumerate}
\end{defn}

\begin{rem}
The axiom (E$2^{\op}$) (and hence also (E$2$)) we give above differ by a small subtlety compared to the original in \cite[Def.~4.2]{Jasso-n-abelian-and-n-exact-categories}. However, nothing is lost due to \cite[Prop.~4.8]{Jasso-n-abelian-and-n-exact-categories}. We use the version stated above for simplicity in our exposition below.
\end{rem}

\subsection{Relative theory via \texorpdfstring{$d$}{d}-exangulated categories}

Herschend--Liu--Nakaoka introduced $d$-exangulated categories in \cite{HerschendLiuNakaoka-n-exangulated-categories-I-definitions-and-fundamental-properties}, giving a simultaneous generalisation of $d$-exact and $(d+2)$-angulated categories. Since we do not use the specific mechanics of $d$-exangulated categories here, we omit the details. 
However, the relative theory of \cite[Sec.~3.2]{HerschendLiuNakaoka-n-exangulated-categories-I-definitions-and-fundamental-properties} allows us to produce a $d$-exact substructure of a skeletally small $d$-exact category as follows.  
When $d=1$, this is done in \cite[Sec.~1.2]{DraxlerReitenSmaloSolberg-exact-categories-and-vector-space-categories}.

\begin{prop}\label{prop:relative-d-exact-subcategory}
Suppose $(\cat{C},\adj{E})$ is a skeletally small $d$-exact category and that $\cat{X}\sse\cat{C}$ is a full subcategory. 
Define $\adj{E}_{\cat{X}} \sse \adj{E}$ to be all the $\adj{E}$-admissible $d$-exact sequences $B_{\bullet}$ 
such that 
$\restr{(\yoneda\partial^{B}_{1})}{\cat{X}}
    \colon 
        \restr{(\yoneda B_{1})}{\cat{X}} 
            \onto 
        \restr{(\yoneda B_{0})}{\cat{X}}$ 
is an epimorphism. 
Then $(\cat{C},\adj{E}_{\cat{X}})$ is a skeletally small $d$-exact category. 
\end{prop}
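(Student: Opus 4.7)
The plan is to verify each of the axioms (EC), (E0), (E1), (E$1^{\op}$), (E2), and (E$2^{\op}$) of \cref{def:d-exact-category} directly for the class $\adj{E}_{\cat{X}}$. Since $\adj{E}_{\cat{X}} \sse \adj{E}$, each required $d$-exact sequence or $d$-pullback diagram is supplied by the corresponding axiom for $\adj{E}$, and the task reduces to checking that the Yoneda-epi condition on the last morphism persists in the new construction. An alternative route is to pass to the $d$-exangulated reinterpretation of $(\cat{C}, \adj{E})$ and show that $\adj{E}_{\cat{X}}$ arises from a closed subfunctor of the underlying extension bifunctor in the sense of \cite[Sec.~3.2]{HerschendLiuNakaoka-n-exangulated-categories-I-definitions-and-fundamental-properties}; then the relative theory yields a $d$-exangulated substructure that is automatically $d$-exact, because admissible inflations and deflations in $\adj{E}_{\cat{X}}$ remain monic and epic in $\cat{C}$.

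Three axioms are straightforward. Axiom (EC) follows because a weak isomorphism between $d$-exact sequences induces an isomorphism on the restricted Yoneda modules by naturality of the restriction functor. Axiom (E0) is immediate for the zero complex. Axiom (E$1^{\op}$) reduces to the fact that a composition $\restr{\yoneda f}{\cat{X}} \circ \restr{\yoneda g}{\cat{X}}$ of epimorphisms in $\rMod{\cat{X}}$ is itself an epimorphism, so composing $\adj{E}_{\cat{X}}$-admissible deflations remains inside $\adj{E}_{\cat{X}}$. For (E$2^{\op}$), given $B_{\bullet} \in \adj{E}_{\cat{X}}$ and a morphism $g \colon C_{0} \to B_{0}$, the axiom (E$2^{\op}$) applied to $\adj{E}$ supplies the $d$-pullback diagram \eqref{eqn:d-pullback-diagram} with $C_{\bullet} \in \adj{E}$; for any $h \colon X \to C_{0}$ with $X \in \cat{X}$, the composition $gh$ lifts through $\partial^{B}_{1}$ via the Yoneda-epi condition on $B_{\bullet}$, and then the weak universal property of the $d$-pullback produces a lift of $h$ through $\partial^{C}_{1}$, establishing $C_{\bullet} \in \adj{E}_{\cat{X}}$.

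The dual axioms (E1) and (E2), concerning compositions of admissible inflations and the pushout construction, are more delicate. Here I would use a $3 \times 3$-type diagram chase, in the spirit of \cite[Sec.~1.2]{DraxlerReitenSmaloSolberg-exact-categories-and-vector-space-categories} for the $d = 1$ case, to propagate the Yoneda-epi condition from the two constituent last maps to the last map of the composite $d$-exact sequence. The main obstacle is making this diagram chase precise for general $d$: one needs to unpack the structure of the $d$-pushout appearing in the proof of (E1) and carefully exploit the exactness of $\restr{(-)}{\cat{X}}$ (cf.\ \cref{lemmas_FJS}) to transport the epi condition through the resulting diagrams in $\rMod{\cat{X}}$. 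Once the six axioms are verified, skeletal smallness is inherited from $(\cat{C}, \adj{E})$ and the proposition follows.
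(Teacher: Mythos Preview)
Your ``alternative route'' is precisely the proof the paper gives: it views $(\cat{C},\adj{E})$ as a $d$-exangulated category via \cite[Prop.~4.34]{HerschendLiuNakaoka-n-exangulated-categories-I-definitions-and-fundamental-properties}, identifies $\adj{E}_{\cat{X}}$ with the closed subfunctor $\BE_{\cat{X}}$ of \cite[Def.~3.18]{HerschendLiuNakaoka-n-exangulated-categories-I-definitions-and-fundamental-properties} (using \cite[Claim~2.15]{HerschendLiuNakaoka-n-exangulated-categories-I-definitions-and-fundamental-properties} to translate the vanishing-of-pullback condition into the Yoneda-epi condition), and then invokes \cite[Props.~3.16, 3.19, Rem.~4.38]{HerschendLiuNakaoka-n-exangulated-categories-I-definitions-and-fundamental-properties} to conclude. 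So that half of your proposal is correct and matches the paper; the only missing step is the explicit identification $\adj{E}' = \adj{E}_{\cat{X}}$, which is a short unpacking of definitions.

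Your primary plan of directly checking the axioms is a genuinely different route, and the gap you yourself flag at (E1) and (E2) is real. For $d=1$ the argument of \cite[Sec.~1.2]{DraxlerReitenSmaloSolberg-exact-categories-and-vector-space-categories} works because composing two inflations produces a $3\times 3$ diagram of short exact sequences in which the last column is a short exact sequence of \emph{cokernel objects}, and the epi condition on the rightmost vertical follows from the snake lemma in $\rMod{\cat{X}}$. For $d\geq 2$ there is no such last column: the $d$-cokernel of a composite inflation is a length-$d$ complex built from an iterated $d$-pushout, and there is no obvious short exact sequence in $\rMod{\cat{X}}$ relating its degree-$0$ term to those of the two constituent $d$-exact sequences. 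Making this precise would essentially force you to redevelop the closedness argument for the subfunctor $\BE_{\cat{X}}$ inside the $d$-exangulated framework anyway, which is exactly what the paper's approach packages. Two smaller points: your (EC) argument needs the fact that weak isomorphisms of $d$-exact sequences are homotopy equivalences (so that the epi condition transfers even when the degree-$0$ or degree-$1$ component is not a priori an isomorphism), and your (E$2^{\op}$) argument should spell out which universal property of the $d$-pullback produces the lift---the rightmost square in \eqref{eqn:d-pullback-diagram} is not an ordinary pullback for $d\geq 2$, so one needs e.g.\ the mapping-cone characterisation from \cite[dual of Def.~2.11]{Jasso-n-abelian-and-n-exact-categories}.
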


\begin{proof}
By \cite[Prop.~4.34]{HerschendLiuNakaoka-n-exangulated-categories-I-definitions-and-fundamental-properties}, we can view $(\cat{C},\adj{E})$ as a $d$-exangulated category $(\cat{C},\BE,\fs)$. 
The biadditive functor 
$\BE\colon \cat{C}^{\op}\times \cat{C}\to \Ab$ is given as follows.
\begin{enumerate}
    \item For $A,C\in\cat{C}$, define
$\BE(C,A) 
    \deff \Set{ [B_{\bullet}] | B_{\bullet}\in\adj{E},\, B_{d+1} = A \text{ and } B_{0} = C }, 
$ where $[B_{\bullet}]$ denotes the (Yoneda) equivalence class of the $d$-exact sequence $B_{\bullet}$; see the homotopy equivalence relation defined for complexes with fixed end terms in \cite[p.~540]{HerschendLiuNakaoka-n-exangulated-categories-I-definitions-and-fundamental-properties}. 
Equipped with the Baer sum (see \cite[Def.~4.28]{HerschendLiuNakaoka-n-exangulated-categories-I-definitions-and-fundamental-properties}), 
the set $\BE(C,A)$ is an abelian group with identity element being the class
\[
0 = 
[
\begin{tikzcd}
A 
    \arrow{r}{\id{A}}
&A 
    \arrow{r}
&0 
    \arrow{r}
&\cdots
    \arrow{r}
&0
    \arrow{r}
&C 
    \arrow{r}{\id{C}}
&C
\end{tikzcd}
].
\]

    \item For a morphism $g\colon C_{0}\to B_{0}$ in $\cat{C}$, define 
$\BE(g,B_{d+1}) \colon \BE(B_{0},B_{d+1}) \to \BE(C_{0},B_{d+1})$
by 
$
\BE(g,B_{d+1})([B_{\bullet}]) 
\deff [C_{\bullet}]
$
whenever 
$[B_{\bullet}]\in \BE(B_{0},B_{d+1})$ 
and there is a $d$-pullback diagram \eqref{eqn:d-pullback-diagram}. The mapping 
$\BE(B_{0},h)\colon \BE(B_{0},B_{d+1}) \to \BE(B_{0},D_{d+1})$
is defined dually for a morphism $h\colon B_{d+1}\to D_{d+1}$.
\end{enumerate}
The exact realisation $\fs$ is simply given by $\fs([B_{\bullet}]) \deff [B_{\bullet}]$; see \cite[Def.~2.22]{HerschendLiuNakaoka-n-exangulated-categories-I-definitions-and-fundamental-properties}.
(For more details, see \cite[Sec.~4.3]{HerschendLiuNakaoka-n-exangulated-categories-I-definitions-and-fundamental-properties}.)
Importantly, for a complex $B_{\bullet}$, we have
$[B_{\bullet}]\in\BE(B_{0},B_{d+1})$,  
if and only if 
$B_{\bullet}$ is part of a distinguished $d$-exangle in $(\cat{C},\BE,\fs)$, 
if and only if 
$B_{\bullet}$ is an $\adj{E}$-admissible $d$-exact sequence.

Now we use the relative theory for $d$-exangulated categories. Define a subfunctor $\BE_{\cat{X}}$ of $\BE$ by 
\[
\BE_{\cat{X}}(C,A) 
    \deff \Set{[B_{\bullet}]\in\BE(C,A) | \forall g\colon X\to C\text{ with }X\in\cat{X}\text{, we have }\BE(g,A)([B_{\bullet}]) = 0},
\]
and $\BE_{\cat{X}}$ is just the restriction of $\BE$ on morphisms.
Note that $\BE_{\cat{X}}$ is the first bifunctor defined in \cite[Def.~3.18]{HerschendLiuNakaoka-n-exangulated-categories-I-definitions-and-fundamental-properties} with $\cat{I} = \cat{X}$, and hence $(\cat{C},\BE_{\cat{X}},\fs_{\cat{X}})$ is a $d$-exangulated category by \cite[Props.~3.16 and 3.19]{HerschendLiuNakaoka-n-exangulated-categories-I-definitions-and-fundamental-properties}, where $\fs_{\cat{X}}\deff \restr{\fs}{\BE_{\cat{X}}}$.
It follows from \cite[Rem.~4.38]{HerschendLiuNakaoka-n-exangulated-categories-I-definitions-and-fundamental-properties} that $(\cat{C},\BE_{\cat{X}},\fs_{\cat{X}})$ is also a $d$-exact category (see also \cite[Cor.~4.12]{Klapproth-n-extension-closed}). 
That is, there is a (skeletally small) $d$-exact category $(\cat{C},\adj{E}')$ such that 
$[B_{\bullet}]\in\BE_{\cat{X}}(B_{0},B_{d+1})$ 
if and only if 
$B_{\bullet}$ lies in $\adj{E}'$.

Therefore, it suffices to show that 
$\adj{E}' = \adj{E}_{\cat{X}}$. 
This is a consequence of the following identities, where the first equality is by 
\cite[Claim 2.15]{HerschendLiuNakaoka-n-exangulated-categories-I-definitions-and-fundamental-properties}: 
\begin{align*}
\BE_{\cat{X}}(B_{0},B_{d+1}) 
    &= \Set{[B_{\bullet}]\in\BE(B_{0},B_{d+1}) | 
    \begin{array}{l}
        \forall g\colon X\to B_{0}\text{ with }X\in\cat{X}\text{, there exists}\\
        h\colon X\to B_{1}\text{ with }g = \partial^{B}_{1}h
    \end{array}}\\
    &= \Set{[B_{\bullet}]\in\BE(B_{0},B_{d+1}) | 
    \begin{array}{l}
    \cat{C}(X,\partial^{B}_{1})
    \colon \cat{C}(X,B_{1}) \onto \cat{C}(X,B_{0})\\
    \text{is surjective for each }
    X\in\cat{X}
    \end{array}}\\
    &= \Set{[B_{\bullet}]\in\BE(B_{0},B_{d+1}) | 
    \begin{array}{l}
    \restr{(\yoneda\partial^{B}_{1})}{\cat{X}}
    \colon  \restr{(\yoneda {B}_{1})}{\cat{X}} \onto  \restr{(\yoneda {B}_{0})}{\cat{X}}\\
    \text{is an epimorphism in }\rmod{\cat{X}}
    \end{array}}.
\end{align*}
\end{proof}

\subsection{The Grothendieck group of a \texorpdfstring{$d$}{d}-exact category}

Suppose $\cat{C}$ is now also skeletally small. Denote by $\Iso{(\cat{C})}$ the set of isomorphism classes of objects in $\cat{C}$. 
For an object $A\in\cat{C}$, we denote the isomorphism class of $A$ by $[A]\in\Iso{(\cat{C})}$. 
The \emph{split Grothendieck group} $K_{0}^{\sp}(\cat{C})$ of $\cat{C}$ is the free abelian group generated by $\Iso{(\cat{C})}$ modulo the subgroup 
$\Braket{[A] - [B] + [C] | A\to B \to C \text{ is a split short exact sequence in }\cat{C}}$. 
We denote the class in $K_{0}^{\sp}(\cat{C})$ of an object $A\in\cat{C}$ by $[A]^{\sp}$.

\begin{defn}\label{def:grothendieck-group}
The \emph{Grothendieck group} of a skeletally small $d$-exact category $(\cat{C},\adj{E})$ is the abelian group 
\[
\left.K_{0}(\cat{C},\adj{E})
    \deff
K_{0}^{\sp}(\cat{C}) \middle/ 
    \Braket{ \sum_{i=0}^{d+1}(-1)^{i}[B_{i}]^{\sp} | B_{\bullet}\in\adj{E} }.\right.
\]
The class of $A\in\cat{C}$ in $K_{0}(\cat{C},\adj{E})$ is denoted $[A]$.
\end{defn}

By \cref{prop:relative-d-exact-subcategory}, given a full subcategory $\cat{X}$ of a skeletally small $d$-exact category $(\cat{C},\adj{E})$, there is an induced $d$-exact subcategory $(\cat{C},\adj{E}_{\cat{X}})$ of $(\cat{C},\adj{E})$. 
Hence, we may consider the Grothendieck group 
$K_{0}(\cat{C},\adj{E}_{\cat{X}})$ and the canonical surjection
\begin{equation}\label{eqn:pi_X}
\begin{tikzcd}
\pi_{\cat{X}}\colon K_{0}^{\sp}(\cat{C})
    \arrow[two heads]{r}
& K_{0}(\cat{C},\adj{E}_{\cat{X}}).
\end{tikzcd}
\end{equation}
For an object $C\in\cat{C}$, we denote its class in $K_{0}(\cat{C},\adj{E}_{\cat{X}})$ by $[C]_{\cat{X}}$, so that the homomorphism $\pi_{\cat{X}}$ of abelian groups is given by 
$\pi_{\cat{X}}([C]^{\sp}) = [C]_{\cat{X}}$ on generators.

\section{Main results}
\label{sec:main-results}

For the main results in this section, we assume \cref{setup:section4} (see below), which includes that $(\cat{C},\adj{E})$ is an idempotent complete $d$-exact category and $\cat{X}$ is a contravariantly finite (see \cref{def:additive-approxs}), \emph{generating} subcategory of $\cat{C}$. When $d=1$, \cref{def:generating} below recovers that of a generating subcategory of an exact category as defined in \cite[Def.~5.1]{HenrardKvammevanRoosmalen2022}.

\begin{defn}\label{def:generating}
Let $(\cat{C},\adj{E})$ be a $d$-exact category. We call a subcategory $\cat{X}$ of $\cat{C}$ \emph{generating} if, for each $C\in\cat{C}$, there is an $\adj{E}$-admissible deflation $X \onto C$ for some $X\in\cat{X}$. 
\end{defn}

In this section, we work under the following setup.

\begin{setup}\label{setup:section4}
Let $(\cat{C},\adj{E})$ be a skeletally small, idempotent complete, $d$-exact category that has $d$-kernels. In addition, let $\cat{X}$ be a contravariantly finite, generating, additive subcategory of $\cat{C}$.
\end{setup}

By \cref{defn_modC}\ref{modC}, we know that $\rmod{\cat{C}}$ is a skeletally small abelian category, and hence a skeletally small $1$-exact category. The Grothendieck group $K_{0}(\rmod{\cat{C}})$ is thus defined as in \cref{def:grothendieck-group}; it is the split Grothendieck group of $\rmod{\cat{C}}$ modulo relations arising from short exact sequences of functors in $\rmod{\cat{C}}$. Similarly for $K_{0}(\rmod{\cat{X}})$.

We prove \cref{theoremA} in \cref{sec:defining-thetaX}. 
Our strategy is to first produce a homomorphism 
$\theta_{\cat{C}}\colon K_{0}(\rmod{\cat{C}})\to K_{0}^{\sp}(\cat{C})$ (see \cref{thm:thetaC} in \cref{sec:defining-thetaC}), and then use that $K_{0}(\rmod{\cat{X}})$ is the cokernel of $K_{0}(\iota)$ as in \cref{lemmas_FJS} to obtain $\theta_{\cat{X}}\colon K_{0}(\rmod{\cat{X}})\to K_{0}(\cat{C},\adj{E}_{\cat{X}})$. 

Before we give some examples to see that \cref{setup:section4} is reasonable, we observe the following, where an additive category $\cat{C}$ is \textit{weakly idempotent complete} if every split epimorphism has a kernel (or equivalently, every split monomorphism has a cokernel).

\begin{lem}\label{lem:precovering-and-generating-iff-Gen}
Suppose $(\cat{C},\adj{E})$ is a $d$-exact category and $\cat{X}\sse\cat{C}$ is a subcategory. If $d=1$, assume further that $\cat{C}$ is weakly idempotent complete. 
Then $\cat{X}$ is contravariantly finite and generating, if and only if $\cat{X}$ satisfies: 
\begin{enumerate}[label=\textup{(\textbf{Gen})}]
    \item\label{gen} For each $C\in\cat{C}$, there exists a right $\cat{X}$-approximation $X\onto C$ of $C$ that is also an $\adj{E}$-admissible deflation.
\end{enumerate}
\end{lem}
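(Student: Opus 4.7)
The ($\Leftarrow$) direction is immediate from the definitions: a morphism satisfying \ref{gen} is simultaneously a right $\cat{X}$-approximation of its target (so $\cat{X}$ is contravariantly finite) and an $\adj{E}$-admissible deflation from an object of $\cat{X}$ (so $\cat{X}$ is generating).

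For ($\Rightarrow$), fix $C \in \cat{C}$. By the generating hypothesis there is an $\adj{E}$-admissible deflation $p \colon P \onto C$ with $P \in \cat{X}$, and by contravariant finiteness there is a right $\cat{X}$-approximation $a \colon A \to C$. My plan is to show that
\[
\sigma \deff \begin{pmatrix} p & a \end{pmatrix} \colon P \oplus A \longrightarrow C
\]
realises \ref{gen} at $C$. Note that $P \oplus A \in \cat{X}$, and any morphism $f \colon X \to C$ with $X \in \cat{X}$ factors through $a$ and hence through $\sigma$, so $\sigma$ is a right $\cat{X}$-approximation.

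The substantive step is to verify that $\sigma$ is an $\adj{E}$-admissible deflation. I would factor
\[
\begin{tikzcd}[column sep=1.2cm]
P \oplus A
    \arrow{r}{p \oplus 1_{A}}
& C \oplus A
    \arrow{r}{\begin{pmatrix} 1_{C} & a \end{pmatrix}}
& C
\end{tikzcd}
\]
and deduce admissibility of $\sigma$ from admissibility of each factor using axiom (E$1^{\op}$). The first factor is admissible because direct sums of admissible $d$-exact sequences are admissible, and $1_{A}$ extends to a trivial $d$-exact sequence. For the second factor I would exhibit the candidate
\[
\begin{tikzcd}[column sep=1.2cm]
0 \arrow{r} & \cdots \arrow{r} & 0 \arrow{r} & A \arrow[tail]{r}{\binom{-a}{1_{A}}} & C \oplus A \arrow[two heads]{r}{\begin{pmatrix} 1_{C} & a \end{pmatrix}} & C
\end{tikzcd}
\]
and verify it is $d$-exact by a direct Yoneda-level computation; equivalently, it is isomorphic as a complex to the standard split $d$-exact sequence with middle term $A \oplus C$ via a suitable change of basis on the middle term, and membership in $\adj{E}$ then follows from axiom (EC). The hypothesis of weak idempotent completeness in the case $d=1$ serves precisely to guarantee that such split short exact sequences lie in $\adj{E}$ in the classical exact-category framework of \cite{Buhler-exact-categories}; for $d \geq 2$ the Yoneda-level argument is self-contained.

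The main obstacle I anticipate is establishing admissibility of the second factor cleanly from the $d$-exact axioms; once in place, combining with the first factor and (E$1^{\op}$) concludes the argument.
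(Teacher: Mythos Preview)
Your approach is different from the paper's, and as written it has a gap relative to the stated hypotheses. The paper does not form $P\oplus A$; instead, since $a\colon A\to C$ is a right $\cat{X}$-approximation and $P\in\cat{X}$, the admissible deflation $p$ factors as $p=ay$ for some $y\colon P\to A$, and one then invokes \cite[dual of Cor.~2.5]{Klapproth-n-extension-closed} to conclude that $a$ itself is an $\adj{E}$-admissible deflation. For $d=1$ this cancellation property is precisely the ``obscure axiom'', which is equivalent to weak idempotent completeness (see \cite[Prop.~7.6]{Buhler-exact-categories}); \emph{that} is where the hypothesis is actually used.

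Your argument needs $P\oplus A\in\cat{X}$, but the lemma only assumes $\cat{X}$ is a subcategory, not that it is closed under finite direct sums. Your account of the role of weak idempotent completeness is also incorrect: split short exact sequences lie in $\adj{E}$ in \emph{any} exact category---this follows from the basic axioms in \cite{Buhler-exact-categories} and does not require weak idempotent completeness. In fact, if one \emph{adds} the hypothesis that $\cat{X}$ is closed under finite direct sums, your factorisation $\sigma=(1_{C}\;\; a)\circ(p\oplus 1_{A})$ combined with (E$1^{\op}$) goes through without ever invoking weak idempotent completeness. So your route, once patched, would establish a variant of the lemma with ``$\cat{X}$ additive'' in place of ``$\cat{C}$ weakly idempotent complete'' for $d=1$; the two hypotheses are incomparable, and it is the paper's version that matches the lemma as stated.
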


\begin{proof}
Suppose $\cat{X}$ is contravariantly finite and generating in $\cat{C}$, so that for $C\in\cat{C}$ there is a right $\cat{X}$-approximation $x\colon X\to C$ and an $\adj{E}$-admissible deflation $x'\colon X'\onto C$ with $X'\in\cat{X}$. This implies that there exists a morphism $y\colon X'\to X$ with $xy = x'$.  Since $\cat{C}$ is weakly idempotent complete, it follows from \cite[dual of Cor.~2.5]{Klapproth-n-extension-closed} that $x\colon X \onto C$ is also an $\adj{E}$-admissible deflation. 
Hence, $\cat{X}$ satisfies \ref{gen}. 

For the converse there is nothing to show. 
\end{proof}

Note that in \cref{lem:precovering-and-generating-iff-Gen}, the implication 
\begin{align*}
\cat{X}\text{ satisfies \ref{gen} }\Longrightarrow\cat{X}\text{ is contravariantly finite and generating}
\end{align*}
always holds 
(even when $d=1$ we need not assume $\cat{C}$ is weakly idempotent complete).

\begin{rem}
When $d=1$, the condition \ref{gen} was called `strongly contravariantly finite' in e.g.\ \cite[Def.~3.19]{ZhouZhu-triangulated-quotient-categories-revisited}, \cite[p.~6]{FaberMarshPressland-reduction-of-frobenius-extriangulated-categories}. 
However, `strongly' has also been used previously (e.g.\ in \cite[Def.~4.3]{HerschendJorgensenVaso-wide-subcategories-of-d-CT-subcategories}) to indicate a uniqueness in factorisations for approximations. Therefore, we avoid this terminology here. 
\end{rem}

Now we recall some contexts in which generating subcategories appear.

\begin{example}\label{example:generating}
Consider an exact category $(\cat{C},\adj{E})$ where $\cat{C}$ is abelian and $\cat{E}$ is the exact structure consisting of all the short exact sequences in $\cat{C}$. 
In this case, since the $\adj{E}$-admissible deflations are precisely the epimorphisms in $\cat{C}$, a subcategory $\cat{X}\sse\cat{C}$ is generating in the sense of Definition~\ref{def:generating} if and only if, for each $C\in\cat{C}$, there is an epimorphism $X\onto C$ in $\cat{C}$ for some $X\in\cat{X}$ (see also \cite[p.~724]{Jasso-n-abelian-and-n-exact-categories}). 

As an explicit example, let $R$ be a ring and consider the abelian category $\rMod{R}$ of right $R$-modules. The subcategory $\rProj{R}$ of projective right $R$-modules is generating in $\rMod{R}$. 
\end{example}

\begin{example}\label{example:d-CT}
By definition, any $d$-cluster tilting subcategory of an abelian category is contravariantly finite and generating (see \cite[Def.~3.14]{Jasso-n-abelian-and-n-exact-categories}). 
More generally, any $d$-cluster tilting subcategory of an exact category satisfies \ref{gen} by definition (see \cite[Def.~4.13(ii)]{Jasso-n-abelian-and-n-exact-categories}), and hence is contravariantly finite and generating by \cref{lem:precovering-and-generating-iff-Gen}.
\end{example}

\subsection{Defining \texorpdfstring{$\theta_{\cat{C}}$}{thetaC}}\label{sec:defining-thetaC}

We define $\theta_{\cat{C}}\colon K_{0}(\rmod{\cat{C}})\to K_{0}^{\sp}(\cat{C})$ via three lemmas, which are analogues of the results in \cite[Sec.~4.1]{fedele2024index}. 
We start by showing that any finitely presented $\cat{C}$-module $\fun{M}$ has projective dimension at most $d+1$ in $\rmod{\cat{C}}$.

\begin{lem}\label{lem:projective-resolution}
For each $\fun{M}\in\rmod{\cat{C}}$, there is a left $d$-exact sequence 
\begin{equation}\label{eqn:left-d-exact-A}
\begin{tikzcd}
A_{d+1}
    \arrow[tail]{r}{\partial^{A}_{d+1}}
&A_{d}
    \arrow{r}{\partial^{A}_{d}}
&\cdots
    \arrow{r}{\partial^{A}_{2}}
&A_{1}
    \arrow{r}{\partial^{A}_{1}}
&A_{0}
\end{tikzcd}
\end{equation}
in $\cat{C}$, such that the following induced sequence is exact in $\rmod{\cat{C}}$. 
\begin{equation}\label{eqn:proj-res-for-M}
\begin{tikzcd}[column sep=0.9cm]
0
    \arrow{r}
&\yoneda A_{d+1}
    \arrow{r}{\yoneda \partial^{A}_{d+1}}
&\yoneda A_{d}
    \arrow{r}{\yoneda \partial^{A}_{d}}
&\cdots
    \arrow{r}{\yoneda \partial^{A}_{2}}
&\yoneda A_{1}
    \arrow{r}{\yoneda \partial^{A}_{1}}
&\yoneda A_{0}
    \arrow{r}
& \fun{M}
    \arrow{r}
&0
\end{tikzcd}
\end{equation}
\end{lem}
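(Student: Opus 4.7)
The plan is to first use the finite presentation of $\fun{M}$ and the Yoneda embedding to produce the rightmost part of the desired resolution, and then to extend it to the left by invoking the assumption that $\cat{C}$ has $d$-kernels.

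First, because $\fun{M}\in\rmod{\cat{C}}$, by \cref{defn_modC}\ref{modC} there exist objects $A_{0},A_{1}\in\cat{C}$ and an exact sequence
\[
\begin{tikzcd}
\yoneda A_{1}
    \arrow{r}{\alpha}
&\yoneda A_{0}
    \arrow{r}
&\fun{M}
    \arrow{r}
&0
\end{tikzcd}
\]
in $\rmod{\cat{C}}$. Since the Yoneda embedding $\yoneda \colon \cat{C}\to\rmod{\cat{C}}$ is fully faithful (\cref{defn_modC}\ref{Yoneda}), there is a (unique) morphism $\partial^{A}_{1}\colon A_{1}\to A_{0}$ in $\cat{C}$ with $\alpha = \yoneda\partial^{A}_{1}$.

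Next, because $(\cat{C},\adj{E})$ has $d$-kernels by \cref{setup:section4} (see \cref{def:has-d-kernels}), I can choose a $d$-kernel $(\partial^{A}_{d+1},\dots,\partial^{A}_{2})$ of $\partial^{A}_{1}$, producing a left $d$-exact sequence as in \eqref{eqn:left-d-exact-A}. By \cref{def:d-kernels} (and since any left $d$-exact sequence starts with a monomorphism), the induced sequence
\[
\begin{tikzcd}[column sep=0.9cm]
0
    \arrow{r}
&\yoneda A_{d+1}
    \arrow{r}{\yoneda \partial^{A}_{d+1}}
&\yoneda A_{d}
    \arrow{r}{\yoneda \partial^{A}_{d}}
&\cdots
    \arrow{r}{\yoneda \partial^{A}_{2}}
&\yoneda A_{1}
    \arrow{r}{\yoneda \partial^{A}_{1}}
&\yoneda A_{0}
\end{tikzcd}
\]
is exact in $\rMod{\cat{C}}$. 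Splicing this with the presentation of $\fun{M}$ above, I obtain a sequence of the form \eqref{eqn:proj-res-for-M} which is exact in $\rMod{\cat{C}}$.

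Finally, it remains to transfer exactness from $\rMod{\cat{C}}$ to $\rmod{\cat{C}}$. Since $\cat{C}$ has $d$-kernels, it in particular has weak kernels; hence by \cref{defn_modC}\ref{modC}, $\rmod{\cat{C}}$ is abelian and the inclusion $\rmod{\cat{C}} \hookrightarrow \rMod{\cat{C}}$ is exact. As every term of \eqref{eqn:proj-res-for-M} lies in $\rmod{\cat{C}}$ (the representables are projective in $\rmod{\cat{C}}$ by \cref{defn_modC}\ref{Yoneda}, and $\fun{M}\in\rmod{\cat{C}}$ by assumption), exactness in $\rMod{\cat{C}}$ is equivalent to exactness in $\rmod{\cat{C}}$, completing the argument. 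I do not anticipate a serious obstacle here; the only subtle point is this passage between $\rmod{\cat{C}}$ and $\rMod{\cat{C}}$, which is settled by the exactness of the inclusion.
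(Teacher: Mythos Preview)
Your proof is correct and follows essentially the same approach as the paper: use the finite presentation of $\fun{M}$ to extract $\partial^{A}_{1}$, then invoke the existence of $d$-kernels in $\cat{C}$ to extend to a left $d$-exact sequence. The only difference is that you make explicit the passage from exactness in $\rMod{\cat{C}}$ to exactness in $\rmod{\cat{C}}$ via the fully faithful exact inclusion, whereas the paper leaves this step implicit.
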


\begin{proof}
If $\fun{M}\in\rmod{\cat{C}}$, then there is an exact sequence 
$
\begin{tikzcd}
\yoneda A_{1}
    \arrow{r}{\yoneda \partial^{A}_{1}}
&\yoneda A_{0}
    \arrow{r}
& \fun{M} 
    \arrow{r}
&0
\end{tikzcd}
$
for some morphism $\partial^{A}_{1} \colon A_{1} \to A_{0}$ in $\cat{C}$ (see \cref{defn_modC}). 
Since $\cat{C}$ has $d$-kernels, we obtain a left $d$-exact sequence of the form \eqref{eqn:left-d-exact-A}, which induces the exact sequence \eqref{eqn:proj-res-for-M}.
\end{proof}

Note that \eqref{eqn:left-d-exact-A} is not necessarily $\cat{E}$-admissible. Moreover, it follows from \cref{lem:projective-resolution} that each $\fun{M}\in\rmod{\cat{C}}$ fits in an exact sequence of the form \eqref{eqn:proj-res-for-M} and in this case $M$ is isomorphic to $\Cok(\yoneda \partial^{A}_{1})$.
Over the next two lemmas, we will show that the assignment 
$
\Cok(\yoneda \partial^{A}_{1}) 
    \overset{\theta_{\cat{C}}}{\longmapsto}
    \sum_{i=0}^{d+1}(-1)^{i}[A_{i}]^{\sp}
$
is well-defined on $K_{0}(\rmod{\cat{C}})$.

\begin{lem}\label{lem:isoclasses}
Assume that \eqref{eqn:left-d-exact-A} and 
\begin{equation}\label{eqn:left-d-exact-B}
\begin{tikzcd}
B_{d+1}
    \arrow[tail]{r}{\partial^{B}_{d+1}}
&B_{d}
    \arrow{r}{\partial^{B}_{d}}
&\cdots
    \arrow{r}{\partial^{B}_{2}}
&B_{1}
    \arrow{r}{\partial^{B}_{1}}
&B_{0}
\end{tikzcd}
\end{equation}
are left $d$-exact sequences in $\cat{C}$ with 
$\Cok(\yoneda \partial^{A}_{1}) \cong \Cok(\yoneda \partial^{B}_{1})$. 
Then in $K_{0}^{sp}(\cat{C})$ we have 
\[
\sum_{i=0}^{d+1}(-1)^{i}[A_{i}]^{\sp}
    = \sum_{i=0}^{d+1}(-1)^{i}[B_{i}]^{\sp}.
\]
\end{lem}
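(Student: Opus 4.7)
The plan is to apply the generalised Schanuel lemma in $\rmod{\cat{C}}$. Write $\fun{M} \deff \Cok(\yoneda \partial^{A}_{1}) \cong \Cok(\yoneda \partial^{B}_{1})$. By \cref{defn_modC}, each $\yoneda A_{i}$ and $\yoneda B_{i}$ is projective in $\rmod{\cat{C}}$, so applying the (fully faithful, additive) Yoneda embedding to \eqref{eqn:left-d-exact-A} and \eqref{eqn:left-d-exact-B} and splicing on the canonical epimorphisms to $\fun{M}$ produces two finite projective resolutions of $\fun{M}$ of length $d+1$ in $\rmod{\cat{C}}$:
\begin{equation*}
\begin{tikzcd}[column sep=0.65cm]
0
    \arrow{r}
&\yoneda A_{d+1}
    \arrow{r}
&\yoneda A_{d}
    \arrow{r}
&\cdots
    \arrow{r}
&\yoneda A_{0}
    \arrow{r}
&\fun{M}
    \arrow{r}
&0,
\end{tikzcd}
\end{equation*}
and similarly for the $\yoneda B_{i}$.

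I would then invoke the generalised Schanuel lemma for finite projective resolutions, giving an isomorphism
\begin{equation*}
\bigoplus_{i\text{ even}} \yoneda A_{i} \,\oplus\, \bigoplus_{i\text{ odd}} \yoneda B_{i}
    \;\cong\;
\bigoplus_{i\text{ even}} \yoneda B_{i} \,\oplus\, \bigoplus_{i\text{ odd}} \yoneda A_{i}
\end{equation*}
in $\rmod{\cat{C}}$. One proves this by induction on the length, using the classical Schanuel lemma at each step to replace the rightmost short exact sequences of the two resolutions by a common pullback short exact sequence and to absorb the resulting projective summands into shorter resolutions, which reduces the comparison to one of resolutions of length $d$. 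Since $\yoneda$ is fully faithful and additive, the above isomorphism in $\rmod{\cat{C}}$ descends to the corresponding isomorphism in $\cat{C}$, and taking classes in $K_{0}^{\sp}(\cat{C})$ yields the desired equality
\begin{equation*}
\sum_{i=0}^{d+1}(-1)^{i}[A_{i}]^{\sp}
    =
\sum_{i=0}^{d+1}(-1)^{i}[B_{i}]^{\sp}.
\end{equation*}

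The main obstacle is executing the Schanuel induction cleanly, in particular tracking the parity of each index so that the telescoping cancellation works out correctly. An alternative route, which avoids all of this bookkeeping, is to apply the comparison theorem for projective resolutions: the isomorphism $\Cok(\yoneda \partial^{A}_{1}) \cong \Cok(\yoneda \partial^{B}_{1})$ lifts to a chain map between the two resolutions, and its mapping cone is an acyclic bounded complex of projectives in $\rmod{\cat{C}}$, hence contractible. The contraction then witnesses the required stable isomorphism between the alternating direct sums directly, making the equality in $K_{0}^{\sp}(\cat{C})$ immediate once one descends back to $\cat{C}$ via the Yoneda embedding.
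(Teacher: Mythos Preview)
Your proposal is correct and follows essentially the same approach as the paper: both apply the generalised Schanuel lemma to the two projective resolutions of $\fun{M}$ in $\rmod{\cat{C}}$, then descend the resulting alternating direct sum isomorphism to $\cat{C}$ via the fully faithful additive Yoneda embedding to obtain the equality in $K_{0}^{\sp}(\cat{C})$. The paper also first lifts the isomorphism of cokernels to a chain map between the resolutions before invoking repeated Schanuel, which is close in spirit to your mapping-cone alternative; either packaging works.
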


\begin{proof}
Since $\yoneda A_{i}$ and $\yoneda B_{i}$ are projective objects in $\rmod{\cat{C}}$, by \cite[Lem.~IX.6.3]{Aluffi-Chapter0} we obtain the following commutative diagram with exact rows in the abelian category $\rmod{\cat{C}}$.
\[
\begin{tikzcd}[column sep=1cm]
0
    \arrow{r}
&[-0.4cm]\yoneda A_{d+1}
    \arrow{r}{\yoneda \partial^{A}_{d+1}}
    \arrow[dotted]{d}
&[0.2cm]\yoneda A_{d}
    \arrow{r}{\yoneda \partial^{A}_{d}}
    \arrow[dotted]{d}
&\cdots
    \arrow{r}{\yoneda \partial^{A}_{2}}
&\yoneda A_{1}
    \arrow{r}{\yoneda \partial^{A}_{1}}
    \arrow[dotted]{d}
&\yoneda A_{0}
    \arrow{r}
    \arrow[dotted]{d}
& \Cok(\yoneda \partial^{A}_{1})
    \arrow{r}
    \arrow{d}{\cong}
&[-0.4cm]0
\\
0
    \arrow{r}
&[-0.4cm]\yoneda B_{d+1}
    \arrow{r}{\yoneda \partial^{B}_{d+1}}
&[0.2cm]\yoneda B_{d}
    \arrow{r}{\yoneda \partial^{B}_{d}}
&\cdots
    \arrow{r}{\yoneda \partial^{B}_{2}}
&\yoneda B_{1}
    \arrow{r}{\yoneda \partial^{B}_{1}}
&\yoneda B_{0}
    \arrow{r}
& \Cok(\yoneda \partial^{B}_{1})
    \arrow{r}
&[-0.4cm]0
\end{tikzcd}
\]
Repeated use of Schanuel's Lemma (see  \cite[Cor.~I.6.4]{Bass-algebraic-k-theory-book}) yields 
\[
\left(\bigoplus_{0 \leq 2i \leq d+1}\yoneda A_{2i}\right) \oplus \left(\bigoplus_{0 \leq 2i+1 \leq d+1}\yoneda B_{2i+1}\right) 
    \cong
    \left(\bigoplus_{0 \leq 2i+1 \leq d+1}\yoneda A_{2i+1}\right) \oplus \left(\bigoplus_{0 \leq 2i \leq d+1}\yoneda B_{2i}\right)
\]
in $\rmod{\cat{C}}$. 
Since $\yoneda$ is fully faithful and additive, this implies 
\[
\left(\bigoplus_{0 \leq 2i \leq d+1} A_{2i}\right) \oplus \left(\bigoplus_{0 \leq 2i+1 \leq d+1} B_{2i+1}\right) 
    \cong
    \left(\bigoplus_{0 \leq 2i+1 \leq d+1} A_{2i+1}\right) \oplus \left(\bigoplus_{0 \leq 2i \leq d+1} B_{2i}\right)
\]
in $\cat{C}$, and hence in $K_{0}^{\sp}(\cat{C})$ we have 
\[
\left(\sum_{0 \leq 2i \leq d+1} [A_{2i}]^{\sp}\right) + \left(\sum_{0 \leq 2i+1 \leq d+1} [B_{2i+1}]^{\sp}\right) 
    =
    \left(\sum_{0 \leq 2i+1 \leq d+1} [A_{2i+1}]^{\sp}\right) + \left(\sum_{0 \leq 2i \leq d+1} [B_{2i}]^{\sp}\right).
\]
Rearranging this gives the desired equation.
\end{proof}

The final ingredient for our first main result is as follows.

\begin{lem}\label{lem:ses-in-modC}
Suppose 
$\begin{tikzcd}[column sep=0.5cm]
0 
    \arrow{r}{} 
& \fun{M'} 
    \arrow{r}{\alpha} 
& \fun{M} 
    \arrow{r}{\beta} 
& \fun{M''}
    \arrow{r}{} 
& 0
\end{tikzcd}$ 
is a short exact sequence in $\rmod{\cat{C}}$. 
Then there are left $d$-exact sequences 
\eqref{eqn:left-d-exact-A} and \eqref{eqn:left-d-exact-B} in $\cat{C}$, satisfying 
$\Cok(\yoneda \partial^{A}_{1}) \cong \fun{M'}$
and 
$\Cok(\yoneda \partial^{B}_{1}) \cong \fun{M''}$, 
such that there is also a left $d$-exact sequence of the form 
\begin{equation}\label{eqn:left-d-exact-A-plus-B}
\begin{tikzcd}
A_{d+1} \oplus B_{d+1}
    \arrow[tail]{r}
&A_{d}\oplus B_{d}
    \arrow{r}
&\cdots
    \arrow{r}
&A_{1} \oplus B_{1}
    \arrow{r}{\partial}
&A_{0}\oplus B_{0} 
\end{tikzcd}
\end{equation}
with $\Cok(\yoneda \partial) \cong \fun{M}$. 
\end{lem}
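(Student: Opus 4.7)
The plan is to build left $d$-exact sequences in $\cat{C}$ that correspond, via the Yoneda embedding, to projective resolutions of $\fun{M'}$, $\fun{M}$ and $\fun{M''}$ in $\rmod{\cat{C}}$ arranged in a horseshoe pattern.

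First I would apply \cref{lem:projective-resolution} separately to $\fun{M'}$ and $\fun{M''}$ to obtain left $d$-exact sequences of the form \eqref{eqn:left-d-exact-A} and \eqref{eqn:left-d-exact-B} in $\cat{C}$ with $\Cok(\yoneda \partial^{A}_{1}) \cong \fun{M'}$ and $\Cok(\yoneda \partial^{B}_{1}) \cong \fun{M''}$; this already furnishes the first two sequences required by the statement. Their Yoneda images then give projective resolutions of length $d+1$ of $\fun{M'}$ and $\fun{M''}$ in the abelian category $\rmod{\cat{C}}$.

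Next I would apply the Horseshoe Lemma in $\rmod{\cat{C}}$ to the given short exact sequence using these two projective resolutions. This produces a projective resolution of $\fun{M}$ whose $k$-th term is $\yoneda A_k \oplus \yoneda B_k$ and whose differentials are upper triangular, of the form $\left(\begin{smallmatrix} \yoneda \partial^{A}_{k} & \gamma_{k} \\ 0 & \yoneda \partial^{B}_{k}\end{smallmatrix}\right)$ for suitable morphisms $\gamma_k$. The resolution terminates at level $d+1$: at that stage the short exact sequence of top syzygies $0 \to \yoneda A_{d+1} \to \Omega^{d+1}\fun{M} \to \yoneda B_{d+1} \to 0$ splits, because $\yoneda B_{d+1}$ is projective, and so $\Omega^{d+1}\fun{M} \cong \yoneda A_{d+1} \oplus \yoneda B_{d+1}$ is itself projective.

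Finally, since $\yoneda$ is fully faithful and additive (\cref{defn_modC}\ref{Yoneda}), I may identify $\yoneda A_k \oplus \yoneda B_k \cong \yoneda(A_k \oplus B_k)$ and lift each upper triangular differential uniquely to a morphism $\partial_k \colon A_k \oplus B_k \to A_{k-1}\oplus B_{k-1}$ in $\cat{C}$. The resulting complex is the sought sequence \eqref{eqn:left-d-exact-A-plus-B}: its Yoneda image is exact with cokernel isomorphic to $\fun{M}$, and the leftmost differential is a monomorphism in $\cat{C}$ because its Yoneda image is (using faithfulness of $\yoneda$), so it is a left $d$-exact sequence in the sense of \cref{def:d-kernels} with $\Cok(\yoneda \partial_{1}) \cong \fun{M}$. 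The main technical step will be the correct iterative application of the Horseshoe Lemma together with the splitting argument that caps the length of the middle resolution at $d+1$; both rely on the fact that $\cat{C}$ has $d$-kernels, which guarantees the projective dimension bound in $\rmod{\cat{C}}$ underlying \cref{lem:projective-resolution}.
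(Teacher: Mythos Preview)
Your proposal is correct and follows essentially the same approach as the paper: apply \cref{lem:projective-resolution} to the outer terms, invoke the Horseshoe Lemma in $\rmod{\cat{C}}$, and then use full faithfulness and additivity of $\yoneda$ to lift the middle resolution back to a left $d$-exact sequence in $\cat{C}$. The only difference is that you spell out the upper-triangular form of the differentials and the splitting argument at level $d+1$, whereas the paper simply cites the Horseshoe Lemma and displays the resulting diagram.
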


\begin{proof}
\cref{lem:projective-resolution} ensures the existence of left $d$-exact sequences \eqref{eqn:left-d-exact-A} and \eqref{eqn:left-d-exact-B} so that
\begin{align}
\begin{tikzcd}[column sep=0.9cm,ampersand replacement=\&]
0
    \arrow{r}
\&\yoneda A_{d+1}
    \arrow{r}{\yoneda \partial^{A}_{d+1}}
\&\yoneda A_{d}
    \arrow{r}{\yoneda \partial^{A}_{d}}
\&\cdots
    \arrow{r}{\yoneda \partial^{A}_{2}}
\&\yoneda A_{1}
    \arrow{r}{\yoneda \partial^{A}_{1}}
\&\yoneda A_{0}
    \arrow{r}
\& \fun{M'}
    \arrow{r}
\&0
\end{tikzcd}\label{eqn:proj-res-for-M'}
\\
\begin{tikzcd}[column sep=0.9cm,ampersand replacement=\&]
0
    \arrow{r}
\&\yoneda B_{d+1}
    \arrow{r}{\yoneda \partial^{B}_{d+1}}
\&\yoneda B_{d}
    \arrow{r}{\yoneda \partial^{B}_{d}}
\&\cdots
    \arrow{r}{\yoneda \partial^{B}_{2}}
\&\yoneda B_{1}
    \arrow{r}{\yoneda \partial^{B}_{1}}
\&\yoneda B_{0}
    \arrow{r}
\& \fun{M''}
    \arrow{r}
\&0
\end{tikzcd}\label{eqn:proj-res-for-M''}
\end{align}
are exact in $\rmod{\cat{C}}$. 
Since the objects $\yoneda A_{i}$ and $\yoneda B_{i}$ are projective in $\rmod{\cat{C}}$, we can use the Horseshoe Lemma (see \cite[Lem.~IX.7.8]{Aluffi-Chapter0}) to obtain the  commutative diagram
\begin{equation}\label{eqn:horseshoe}
\begin{tikzcd}[column sep=0.6cm,ampersand replacement=\&]
{}\& 0\arrow{d} \& {}\&0 \arrow{d}\& 0 \arrow{d}\& 0 \arrow{d}\& {} \\
0
    \arrow{r}
\&\yoneda A_{d+1}
    \arrow{r}{\yoneda \partial^{A}_{d+1}}
    \arrow{d}
\&\cdots
    \arrow{r}{\yoneda \partial^{A}_{2}}
\&\yoneda A_{1}
    \arrow{r}{\yoneda \partial^{A}_{1}}
    \arrow{d}
\&\yoneda A_{0}
    \arrow{r}
    \arrow{d}
\& \fun{M'}
    \arrow{r}
    \arrow{d}{\alpha}
\&0
\\
0
    \arrow{r}
\&\yoneda A_{d+1} \oplus \yoneda B_{d+1}
    \arrow{r}{}
    \arrow{d}
\&\cdots
    \arrow{r}{}
\&\yoneda A_{1} \oplus \yoneda B_{1}
    \arrow{r}{\yoneda \partial}
    \arrow{d}
\&\yoneda A_{0} \oplus \yoneda B_{0}
    \arrow{r}
    \arrow{d}
\& \fun{M}
    \arrow{r}
    \arrow{d}{\beta}
\&0
\\
0
    \arrow{r}
\&\yoneda B_{d+1}
    \arrow{r}{\yoneda \partial^{B}_{d+1}}
    \arrow{d}
\&\cdots
    \arrow{r}{\yoneda \partial^{B}_{2}}
\&\yoneda B_{1}
    \arrow{r}{\yoneda \partial^{B}_{1}}
    \arrow{d}
\&\yoneda B_{0}
    \arrow{r}
    \arrow{d}
\& \fun{M''}
    \arrow{r}
    \arrow{d}
\&0\\
{}\& 0 \& {}\&0 \& 0 \& 0 \& {}
\end{tikzcd}
\end{equation}
in $\rmod{\cat{C}}$ with exact rows and columns. 
The functor $\yoneda$ being fully faithful and additive yields a sequence of the form \eqref{eqn:left-d-exact-A-plus-B} in $\cat{C}$, and the exactness of the middle row of \eqref{eqn:horseshoe} says that \eqref{eqn:left-d-exact-A-plus-B} is left $d$-exact with $\Cok(\yoneda \partial) \cong \fun{M}$, as required.
\end{proof}

\begin{thm}\label{thm:thetaC}
There is a group homomorphism $\theta_{\cat{C}}\colon K_{0}(\rmod{\cat{C}})\to K_{0}^{\sp}(\cat{C})$ defined by 
$
\theta_{\cat{C}}([\fun{M}]) 
    = \sum_{i=0}^{d+1}(-1)^{i}[A_{i}]^{\sp}
$, 
 for any left $d$-exact sequence \eqref{eqn:left-d-exact-A} in $\cat{C}$ 
satisfying $\Cok(\yoneda \partial^{A}_{1}) \cong \fun{M}$.
\end{thm}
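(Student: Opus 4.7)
The plan is to invoke the universal property of $K_{0}(\rmod{\cat{C}})$: since $\rmod{\cat{C}}$ is an abelian (hence $1$-exact) category, by \cref{def:grothendieck-group} its Grothendieck group is the free abelian group on $\Iso(\rmod{\cat{C}})$ modulo relations $[\fun{M}] - [\fun{M}'] - [\fun{M}''] = 0$ for each short exact sequence $0 \to \fun{M}' \to \fun{M} \to \fun{M}'' \to 0$. So I will first define a candidate assignment on representatives, then descend it to iso classes, then to $K_{0}$.

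First, I would use \cref{lem:projective-resolution} to pick, for each $\fun{M} \in \rmod{\cat{C}}$, a left $d$-exact sequence \eqref{eqn:left-d-exact-A} in $\cat{C}$ whose induced sequence \eqref{eqn:proj-res-for-M} is exact, so that $\Cok(\yoneda\partial^{A}_{1}) \cong \fun{M}$. Declare
\[
\theta_{\cat{C}}([\fun{M}]) \deff \sum_{i=0}^{d+1}(-1)^{i}[A_{i}]^{\sp} \in K_{0}^{\sp}(\cat{C}).
\]
Independence from the choice of the left $d$-exact sequence (and in particular well-definedness on iso classes) is exactly the content of \cref{lem:isoclasses}, so this gives a well-defined map from $\Iso(\rmod{\cat{C}})$, and hence a homomorphism on the free abelian group $\BZ[\Iso(\rmod{\cat{C}})]$.

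Next, I would check that this homomorphism kills the defining relations of $K_{0}(\rmod{\cat{C}})$. Given a short exact sequence $0 \to \fun{M}' \xrightarrow{\alpha} \fun{M} \xrightarrow{\beta} \fun{M}'' \to 0$ in $\rmod{\cat{C}}$, \cref{lem:ses-in-modC} supplies left $d$-exact sequences \eqref{eqn:left-d-exact-A} and \eqref{eqn:left-d-exact-B} with cokernels $\fun{M}'$ and $\fun{M}''$, together with a left $d$-exact sequence \eqref{eqn:left-d-exact-A-plus-B} whose $i$-th term is $A_{i}\oplus B_{i}$ and whose cokernel (under $\yoneda$) is isomorphic to $\fun{M}$. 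Since $[A_{i}\oplus B_{i}]^{\sp} = [A_{i}]^{\sp} + [B_{i}]^{\sp}$ in $K_{0}^{\sp}(\cat{C})$, a direct computation gives
\[
\theta_{\cat{C}}([\fun{M}]) = \sum_{i=0}^{d+1}(-1)^{i}\bigl([A_{i}]^{\sp} + [B_{i}]^{\sp}\bigr) = \theta_{\cat{C}}([\fun{M}']) + \theta_{\cat{C}}([\fun{M}'']),
\]
using the freedom provided by \cref{lem:isoclasses} to evaluate $\theta_{\cat{C}}([\fun{M}])$ on this particular resolution. Hence $\theta_{\cat{C}}$ factors through $K_{0}(\rmod{\cat{C}})$.

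There is essentially no obstacle here beyond bookkeeping: the three preceding lemmas have been tailored so that existence, well-definedness, and additivity correspond exactly to \cref{lem:projective-resolution}, \cref{lem:isoclasses}, and \cref{lem:ses-in-modC}. The only point that warrants care is that in \cref{lem:ses-in-modC} the left $d$-exact sequence for $\fun{M}$ is forced to have terms $A_{i}\oplus B_{i}$ rather than being arbitrary; this is exactly what is needed for the additive splitting in $K_{0}^{\sp}(\cat{C})$, and \cref{lem:isoclasses} legitimises using this specific resolution to evaluate $\theta_{\cat{C}}([\fun{M}])$.
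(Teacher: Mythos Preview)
Your proposal is correct and follows essentially the same approach as the paper's proof: the paper also invokes \cref{lem:projective-resolution} and \cref{lem:isoclasses} to obtain a well-defined homomorphism on the free abelian group on $\Iso(\rmod{\cat{C}})$, and then uses \cref{lem:ses-in-modC} to descend to $K_{0}(\rmod{\cat{C}})$. Your write-up simply spells out the bookkeeping more explicitly.
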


\begin{proof}
Combining Lemmas~\ref{lem:projective-resolution} and \ref{lem:isoclasses}, we see that $\theta_{\cat{C}}$, as defined in the statement of the theorem, gives a well-defined group homomorphism from the free abelian group generated by $\Iso{(\rmod{\cat{C}})}$ to $K_{0}^{\sp}(\cat{C})$. 
\cref{lem:ses-in-modC} shows that this induces the group homomorphism $\theta_{\cat{C}}\colon K_{0}(\rmod{\cat{C}})\to K_{0}^{\sp}(\cat{C})$ as claimed.
\end{proof}


\subsection{The proof of \texorpdfstring{\cref{theoremA}}{Theorem A}}\label{sec:defining-thetaX}

Since we assume $(\cat{C},\adj{E})$ has $d$-kernels, it follows that $\cat{C}$ has weak kernels. Thus, \cref{setup:section2} is satisfied and the results from \cref{background} apply. 
We are now ready to prove \cref{theoremA}. 
Recall that the $d$-exact subcategory $(\cat{C},\adj{E}_{\cat{X}})\sse(\cat{C},\adj{E})$ was defined in \cref{prop:relative-d-exact-subcategory}, and that 
$\pi_{\cat{X}}([C]^{\sp}) = [C]_{\cat{X}}$ for $C\in\cat{C}$ (see \eqref{eqn:pi_X}).

\begin{proof}[Proof of \cref{theoremA}]
For the existence of $\theta_{\cat{X}}\colon K_{0}(\rmod{\cat{X}})\to K_{0}(\cat{C},\adj{E}_{\cat{X}})$, 
we will show 
$\pi_{\cat{X}}\theta_{\cat{C}}K_{0}(\iota) = 0$, where $\iota$ is the inclusion of $\Ker \restr{(-)}{\cat{X}}$ into $\rmod{\cat{C}}$,
and then use \cref{lemmas_FJS}. 
To this end, let $\fun{M}\in\Ker\restr{(-)}{\cat{X}}$ be arbitrary and, as in \cref{lem:projective-resolution}, let 
\begin{equation*}
\begin{tikzcd}
A_{d+1}
    \arrow[tail]{r}{\partial^{A}_{d+1}}
&A_{d}
    \arrow{r}{\partial^{A}_{d}}
&\cdots
    \arrow{r}{\partial^{A}_{2}}
&A_{1}
    \arrow{r}{\partial^{A}_{1}}
&A_{0}
\end{tikzcd}
\end{equation*}
be a left $d$-exact sequence in $\cat{C}$ with $\Cok(\yoneda \partial^{A}_{1}) \cong \fun{M}$.
We claim that we have 
\[
\pi_{\cat{X}}\theta_{\cat{C}}K_{0}(\iota)([\fun{M}]) 
    = \pi_{\cat{X}}\theta_{\cat{C}}([\fun{M}])
    = \pi_{\cat{X}}\left( \sum_{i=0}^{d+1}(-1)^{i}[A_{i}]^{\sp} \right)
    = \sum_{i=0}^{d+1}(-1)^{i}[A_{i}]_{\cat{X}}
    = 0
\]
in $K_{0}(\cat{C},\adj{E}_{\cat{X}})$, using \cref{thm:thetaC} for the second equality.
Indeed, $\Cok(\yoneda \partial^{A}_{1}) \cong \fun{M}$ implies 
$
\Cok\left(\restr{(\yoneda \partial^{A}_{1})}{\cat{X}}\right)
    = \restr{(\Cok(\yoneda \partial^{A}_{1}))}{\cat{X}} 
    \cong \restr{\fun{M}}{\cat{X}} 
    = 0
$ 
since $\restr{(-)}{\cat{X}}$ is an exact functor and $\fun{M}\in\Ker\restr{(-)}{\cat{X}}$. 
But this means that 
$\restr{(\yoneda \partial^{A}_{1})}{\cat{X}}$ 
is an epimorphism, and hence \eqref{eqn:left-d-exact-A} is an $\adj{E}_{\cat{X}}$-admissible $d$-exact sequence (see \cref{prop:relative-d-exact-subcategory}). 
Thus, $\sum_{i=0}^{d+1}(-1)^{i}[A_{i}]_{\cat{X}}$ vanishes in $K_{0}(\cat{C},\adj{E}_{\cat{X}})$ (see \cref{def:grothendieck-group}), as claimed.

Hence, \cref{lemmas_FJS} yields a unique group homomorphism $\theta_{\cat{X}}\colon K_{0}(\rmod{\cat{X}})\to K_{0}(\cat{C},\adj{E}_{\cat{X}})$
such that the following diagram commutes. 
\begin{equation}\label{eqn:thetaC-thetaX}
\begin{tikzcd}[column sep=1.7cm]
K_{0}(\rmod{\cat{C}}) 
    \arrow{r}{K_0(\restr{(-)}{\cat{X}})}
    \arrow{d}[swap]{\pi_{\cat{X}}\theta_{\cat{C}}}
& K_{0}(\rmod{\cat{X}})
    \arrow[dotted]{dl}{\theta_{\cat{X}}}
\\
K_{0}(\cat{C},\adj{E}_{\cat{X}})
&{}
\end{tikzcd}
\end{equation}
It follows from \cref{thm:thetaC}, the exactness of $\restr{(-)}{\cat{X}}$ and the commutativity of \eqref{eqn:thetaC-thetaX} that $\theta_{\cat{X}}$ satisfies 
$\theta_{\cat{X}}([\Cok\left(\restr{(\yoneda \partial^{A}_{1})}{\cat{X}}\right)])
    = \sum_{i=0}^{d+1}(-1)^{i}[A_{i}]_{\cat{X}}$
whenever \eqref{eqn:left-d-exact-A} is an $\adj{E}$-admissible $d$-exact sequence, concluding the proof.
\end{proof}

\begin{proof}[Proof of Proposition~\ref{prop_uniqueness}.]
We first show that $\theta_{\cat{X}}$ respects the stated property. Let
\begin{equation}\label{left_dexact}
\begin{tikzcd}
A_{d+1}
    \arrow[tail]{r}{\partial^{A}_{d+1}}
&A_{d}
    \arrow{r}{\partial^{A}_{d}}
&\cdots
    \arrow{r}{\partial^{A}_{2}}
&A_{1}
    \arrow{r}{\partial^{A}_{1}}
&A_{0}
\end{tikzcd}
\end{equation}
be a left $d$-exact sequence in $\cat{C}$. Then
\begin{equation*}
\begin{tikzcd}
0
    \arrow{r}
&\yoneda A_{d+1}
    \arrow{r}{\yoneda \partial^{A}_{d+1}}
&\cdots
    \arrow{r}{\yoneda \partial^{A}_{2}}
&\yoneda A_{1}
    \arrow{r}{\yoneda \partial^{A}_{1}}
&\yoneda A_{0}
    \arrow{r}
& \Cok\yoneda\partial^A_1
    \arrow{r}
&0
\end{tikzcd}
\end{equation*}
is exact in $\rmod{\cat{C}}$. Since $\restr{(-)}{\cat{X}}$ is exact, letting $\yoneda_\cat{X}:= \restr{\yoneda(-)}{\cat{X}}$, we have that
\begin{equation*}
\begin{tikzcd}
0
    \arrow{r}
&\yoneda_\cat{X} A_{d+1}
    \arrow{r}{\yoneda_\cat{X} \partial^{A}_{d+1}}
&\cdots
    \arrow{r}{\yoneda_\cat{X} \partial^{A}_{2}}
&\yoneda_\cat{X} A_{1}
    \arrow{r}{\yoneda_\cat{X} \partial^{A}_{1}}
&\yoneda_\cat{X} A_{0}
    \arrow{r}
& \Cok\yoneda_\cat{X}\partial^A_1
    \arrow{r}
&0
\end{tikzcd}
\end{equation*}
is exact in $\rmod{\cat{X}}$ and so $[\Cok (\restr{\cat{C}(-,\partial_1^A)}{\cat{X}})]_\cat{X}=[\Cok\yoneda_\cat{X}\partial^A_1]_\cat{X}=\sum_{i=0}^{d+1}(-1)^i[\yoneda_\cat{X}A_i]_\cat{X}$. Hence
\begin{align*}
    \theta_\cat{X}[\Cok (\restr{\cat{C}(-,\partial_1^A)}{\cat{X}})]_\cat{X}
    &=
    \sum_{i=0}^{d+1}(-1)^i\theta_\cat{X}[\yoneda_\cat{X}A_i]_\cat{X}
    =\sum_{i=0}^{d+1}(-1)^i\theta_\cat{X}K_0(\restr{(-)}{\cat{X}})[\yoneda A_i]\\
    &=\sum_{i=0}^{d+1}(-1)^i\pi_\cat{X}\theta_\cat{C}[\yoneda A_i]
    =\sum_{i=0}^{d+1}(-1)^i\pi_\cat{X}[A_i]^{\sp}\\
    &=\sum_{i=0}^{d+1}(-1)^i[A_i]_\cat{X}
\end{align*}
as wished.

We now prove that $\theta_\cat{X}$ is unique with respect to this property. Suppose that
$F: K_0(\rmod{\cat{X}})\to K_0(\cat{C},\cat{E}_\cat{X})$
is a group homomorphism such that if (\ref{left_dexact}) is a left $d$-exact sequence in $\cat{C}$, then $F[\Cok (\restr{\cat{C}(-,\partial_1^A)}{\cat{X}})]_\cat{X}=\sum_{i=0}^{d+1}(-1)^i[A_i]_\cat{X}$.
By the universal property of cokernels, to prove that $F=\theta_\cat{X}$, it is enough to show that $F\circ K_0(\restr{(-)}{\cat{X}})=\pi_\cat{X}\circ\theta_\cat{C}$ on generators $[\fun{M}]\in K_0(\rmod{\cat{C}})$.
By Lemma~\ref{lem:projective-resolution}, for any $\fun{M}\in\rmod{\cat{C}}$, there is a left $d$-exact sequence in $\cat{C}$ of the form (\ref{left_dexact}) such that
\begin{equation*}
\begin{tikzcd}
0
    \arrow{r}
&\yoneda A_{d+1}
    \arrow{r}{\yoneda \partial^{A}_{d+1}}
&\cdots
    \arrow{r}{\yoneda \partial^{A}_{2}}
&\yoneda A_{1}
    \arrow{r}{\yoneda \partial^{A}_{1}}
&\yoneda A_{0}
    \arrow{r}
&\fun{M}
    \arrow{r}
&0
\end{tikzcd}
\end{equation*}
is exact in $\rmod{\cat{C}}$. Applying the exact functor $\restr{(-)}{\cat{X}}$ to it, we obtain an exact sequence in $\rmod{\cat{X}}$ and so
\[
[\Cok\yoneda_\cat{X}{\partial_1^A}]_\cat{X}=[\restr{\fun{M}}{\cat{X}}]_\cat{X}=\sum_{i=0}^{d+1}(-1)^i[\yoneda_\cat{X}{A_i}]_\cat{X}
\]
in $K_0(\rmod{\cat{X}})$.
By the assumption on $F$, we then have that
\begin{align*}
    F\circ K_0(\restr{(-)}{\cat{X}})[\fun{M}]
    &=F[\restr{\fun{M}}{\cat{X}}]_\cat{X}
    =\sum_{i=0}^{d+1}(-1)^i[A_i]_\cat{X}
    =\pi_\cat{X}\theta_\cat{C}\Big(\sum_{i=0}^{d+1}(-1)^i[\yoneda{A_i}]\Big)
    =\pi_\cat{X}\theta_\cat{C}[\fun{M}],
\end{align*}
concluding the proof.
\end{proof}


\appendix 
\section{Examples of \texorpdfstring{$d$}{d}-exact categories with \texorpdfstring{$d$}{d}-kernels}\label{appendix}

Recall that in \cref{setup:section4} for our main results, we assume that $(\cat{C},\adj{E})$ is a skeletally small, idempotent complete, $d$-exact category that has $d$-kernels. 
We motivate our choice in working in this generality in the present appendix. 
In classical homological algebra, i.e.\ when $d=1$, this reduces to exhibiting meaningful examples of exact categories with kernels. To start with, abelian categories, which are exact and have kernels, are in abundance in homological algebra. Let us explore some other examples.

\begin{example}[Torsion(-free) classes]\label{example:torsion-class}
Let $\cat{U}$ be a torsion class in an abelian category $\cat{C}$. Then $\cat{U}$ is extension-closed in $\cat{C}$, and hence inherits an exact structure from the abelian structure on $\cat{C}$. Furthermore, $\cat{U}$ has kernels and cokernels; see e.g.\ \cite[Sec.~5.4]{BondalvandenBergh2003}. 
Note that, unlike the cokernel, the kernel of a morphism $f$ in $\cat{U}$ will not in general agree with the kernel of $f$ in $\cat{C}$. 
Dual statements hold for a torsion-free class in $\cat{C}$.
\end{example}

By \cite[p.~193, Cor.]{Rump-almost-abelian-cats} (see also 
\cite[Prop.~B.3]{BondalvandenBergh2003}), we can view torsion(-free) classes through the lens of quasi-abelian categories.

\begin{example}[Quasi-abelian categories]\label{example:quasi-abelian}
A \emph{quasi-abelian} category is an additive category that has kernels and cokernels, and in which the pushout (resp.\ pullback) of each kernel (resp.\ cokernel) is again a kernel (resp.\ cokernel) (see e.g.\ \cite[Def.~2.3]{HSW21}). 
The class of all kernel-cokernel pairs in a quasi-abelian category $\cat{C}$ forms an exact structure on $\cat{C}$ \cite[Rem.~1.1.11]{Schneiders-quasi-abelian-categories-and-sheaves}, and hence $\cat{C}$ is an exact category with kernels (and cokernels). 
Note that this exact structure on quasi-abelian category is intrinsic to the category and usually not the split exact structure.

Besides algebraic settings, quasi-abelian categories show up in more analytic fields, such as 
functional analysis and harmonic analysis. Indeed, the category of Banach spaces \cite[Prop.~3.1.7]{Prosmans-algebre-homologique-quasi-abelienne} and the category of topological abelian groups \cite[Sec.~2]{Rump-almost-abelian-cats} are quasi-abelian, amongst many other examples. 
\end{example}

There is a wider class of examples that Examples~\ref{example:torsion-class} and \ref{example:quasi-abelian} belong to.

\begin{example}(Pre-abelian categories)
An additive category $\cat{C}$ is said to be \emph{pre-abelian} if every morphism in $\cat{C}$ has a kernel and a cokernel in $\cat{C}$. 
By \cite[Cor.~2]{Rump-on-the-maximal-exact-structure-on-an-additive-category} (see also \cite[Thm.~3.5]{Crivei2012} and \cite[Thm.~3.3]{SiegWegner-maximal-exact-structures-on-additive-categories}), one can equip $\cat{C}$ with a (unique) maximal exact structure, yielding an exact category that has kernels. However, it is harder to say explicitly what the admissible conflations are for an arbitrary pre-abelian category, unlike for a quasi-abelian one.

Examples of pre-abelian categories that are not quasi-abelian include the categories of 
complete Hausdorff locally convex spaces and of 
bornological (Hausdorff and non-Hausdorff) locally convex spaces
over the real (or complex) numbers. 
See \cite[Fig.~1]{HSW21} for a recent overview of various pre-abelian categories and their properties.
\end{example}

The categories in the examples above have \emph{both} kernels and cokernels. However, we emphasise that there are also examples of exact categories with kernels that typically do not have cokernels. 
We give two such examples below. 
\cref{exam:modC} is a category-theoretic and \cref{exam:new-projR} is ring-theoretic.

\begin{example}
\label{exam:modC}
For an additive category $\cat{C}$, the category $\rmod{\cat{C}}$ always has cokernels, but it has kernels if and only if $\cat{C}$ has weak kernels; see \cite[p.~41, Prop.]{Auslander-representation-dimension-of-artin-algebras-QMC}. Therefore, $(\rmod{\cat{C}})^{\op}$ always has kernels, but not necessarily cokernels. 
\end{example}

The authors are very grateful to Raphael Bennett-Tennenhaus for several discussions in preparation of \cref{exam:new-projR}, and to Dag Oskar Madsen for communicating the ring $D$ we define below to them.

\begin{example}
\label{exam:new-projR}
In this example, we will give a triangular matrix ring $D$ that is: 
\begin{itemize}
    \item right artinian, right noetherian and right coherent; 
    \item semiprimary with (right and left) global dimension 2; and 
    \item not left coherent.
\end{itemize}
From this we can deduce that the category of finitely presented (equivalently, finitely generated) right $D$-modules $\rproj{D}$ has kernels but does not have arbitrary cokernels, using the following facts.

Let $R$ be a ring. 
We equip the idempotent complete, 
additive category $\rproj{R}$ with its split exact structure. 
By \cite[Exam.~4.2, Prop.~4.5(1), and the second paragraph on p.~158]{Beligiannis-on-the-freyd-cats-of-additive-cats}, we have that $\rmod{R}$ is abelian if and only if $R$ is right coherent. 
Any right noetherian ring is right coherent by \cite[Exam.~(4.46)(a)]{Lam99}.

In addition, 
we know $\rmod{R}$ has enough projectives by \cite[Prop.~3.6(1) and Exam.~4.2]{Beligiannis-on-the-freyd-cats-of-additive-cats} and $\rproj{R}$ is subcategory of projectives in $\rmod{R}$. 
Thus, if $M\in\rmod{R}$, then we define $\rpd{\rmod{R}}{M}$ to be the minimal length (possibly infinite) of a resolution of $M$ by objects in $\rproj{R}$. 
There are obvious left versions of these definitions too. The next lemma then follows from Schanuel's lemma; see \cite[Ch.~7, 1.2]{McConnellRobson}.

\begin{lem}\label{lem:pd-is-rpd-for-noetherian-rings}
Suppose $R$ is a right noetherian ring and suppose $M\in\rmod{R}$ is a finitely presented (equivalently, finitely generated) right $R$-module.
Then the (usual) projective dimension $\pd{\rMod{R}}{M}$ of $M$ is equal to $\rpd{\rmod{R}}{M}$.  
\end{lem}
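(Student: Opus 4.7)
The plan is to prove the equality $\pd{\rMod{R}}{M} = \rpd{\rmod{R}}{M}$ by establishing two inequalities. The direction $\pd{\rMod{R}}{M} \leq \rpd{\rmod{R}}{M}$ is immediate: any resolution of $M$ by objects of $\rproj{R}$ is, in particular, a projective resolution in the ambient category $\rMod{R}$, since finitely generated projective modules are projective. So the substance of the argument lies in the reverse inequality.

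For the reverse inequality, I would assume $n \deff \pd{\rMod{R}}{M} < \infty$, otherwise there is nothing to show. The first step is to build a resolution of $M$ by finitely generated projectives. Since $M$ is finitely generated, there is a surjection $P_{0} \onto M$ with $P_{0} \in \rproj{R}$. Its kernel is a submodule of the finitely generated module $P_{0}$ over the right noetherian ring $R$, hence is itself finitely generated; one can therefore pick a surjection from some $P_{1} \in \rproj{R}$ onto this kernel. Iterating this procedure yields an exact sequence
\[
\cdots \to P_{2} \to P_{1} \to P_{0} \to M \to 0
\]
in $\rMod{R}$ with each $P_{i} \in \rproj{R}$ and each syzygy finitely generated.

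The second step is to truncate this resolution at level $n$ and show that the $n$-th syzygy $K_{n-1} \deff \Ker(P_{n-1}\to P_{n-2})$ is projective. To this end, I would compare the truncated partial resolution $0 \to K_{n-1} \to P_{n-1} \to \cdots \to P_{0} \to M \to 0$ with an arbitrary projective resolution $0 \to Q_{n} \to Q_{n-1} \to \cdots \to Q_{0} \to M \to 0$ of length $n$ in $\rMod{R}$, which exists by the hypothesis $\pd{\rMod{R}}{M} = n$. The generalised form of Schanuel's lemma recorded in \cite[Ch.~7, 1.2]{McConnellRobson} will then exhibit $K_{n-1}$ as a direct summand of an explicit direct sum of copies of the $Q_{i}$ and $P_{j}$, and hence as a direct summand of a projective module in $\rMod{R}$. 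Being finitely generated, $K_{n-1}$ therefore lies in $\rproj{R}$, and the truncated sequence witnesses $\rpd{\rmod{R}}{M} \leq n$, completing the argument.

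The only potential obstacle is the bookkeeping in the application of Schanuel's lemma, but this is mechanical once the two resolutions are in place. The essential ingredient is the right noetherian hypothesis on $R$, which guarantees that syzygies of finitely generated modules remain finitely generated, and thus that the inductive construction above never leaves the subcategory $\rproj{R}$.
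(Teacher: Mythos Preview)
Your proposal is correct and follows exactly the approach the paper indicates: the paper does not spell out a proof but simply states that the lemma ``follows from Schanuel's lemma; see \cite[Ch.~7, 1.2]{McConnellRobson}'', and your argument is precisely the standard unpacking of that remark, using right noetherianity to keep syzygies finitely generated and then invoking the generalised Schanuel lemma to force the final syzygy to be projective.
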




Since there is an the equivalence 
$\rproj{R} \simeq (\rproj{R^{\op}})^{\op} = (\lproj{R})^{\op}$ 
(see e.g.\ \cite[Prop.~2.3]{Krause-KRS}), 
it follows from \cite[Exam.~4.2, and Prop.~4.5(1), (6)]{Beligiannis-on-the-freyd-cats-of-additive-cats} that 
$\rproj{R}$ has kernels 
	(resp.\ cokernels) 
if and only if 
$R$ is right coherent and $\rpd{\rmod{R}}{M}\leq 2$ for all $M\in\rmod{R}$
	(resp.\ $R$ is left coherent and $\lpd{\lmod{R}}{M}\leq 2$ for all $M\in\lmod{R}$).
In particular, it follows from \cref{lem:pd-is-rpd-for-noetherian-rings} that if $R$ is right noetherian and $\rgldim{R} \leq 2$, then $\rproj{R}$ has kernels.

Now we shall define the ring $D$, which combines the rings from \cite[Exam.~(4.66)(e)]{Lam99} and \cite{Madsen}. 
Let $K\deff \BQ(x_{1},x_{2},\ldots)$ be the field of rational functions over $\BQ$ in a countably infinite number of indeterminates. 
Define a field monomorphism $\phi \colon K \to K$ by $\phi(x_{i}) \deff x_{i+1}$ for all $i \geq 1$. We define a $(K,K)$-bimodule $N$ as follows: 
the underlying set of $N$ is $K$, 
the right $K$-action on $N$ is the usual action (i.e.\ $N_{K}$ is the regular module $K_{K}$), and 
left $K$-action on $N$ is 
$\lambda \cdot n = \phi(\lambda)n$ 
for all $\lambda\in K$ and $n\in N$. 
Notice that $N$ is a $1$-dimensional right $K$-module, but it is an infinite-dimensional left $K$-module with basis $\set{1, x_{1}, x_{1}^{2}, x_{1}^{3}, \ldots}$. 
Finally, we put 
\[
T \deff 
    \begin{pmatrix}
        K & N & N \\
        0 & K & K \\
        0 & 0 & K
    \end{pmatrix},
\hspace{0.5cm}
L \deff 
    \begin{pmatrix}
        0 & 0 & N \\
        0 & 0 & 0 \\
        0 & 0 & 0
    \end{pmatrix} 
\hspace{0.5cm}
\text{and}
\hspace{0.5cm}
D \deff T/L. 
\]

By \cite[Thm.~(1.22)]{Lam-FC}, we see that $D$ is right artinian. Hence, $D$ is right noetherian and semiprimary by \cite[Thm.~(4.15)]{Lam-FC}, and it is right coherent by \cite[Exam.~(4.46)(a)]{Lam99}. 
Since $D$ is semiprimary, we have 
$\rgldim{D} = \lgldim{D}$ by \cite[Cor.~9]{Auslander55}. 
Arguing as in \cite[p.~57, (6)]{Lam-FC}, we have that the Jacobson radical of $D$ is 
\[
\rad D = 
\left.\begin{pmatrix}
    0 & N & N \\
    0 & 0 & K \\
    0 & 0 & 0
\end{pmatrix}
\middle/ 
L
\right.
\] 
and hence the simple right $D$-modules are 
$
\begin{pmatrix}
    K & 0 & 0
\end{pmatrix},$ 
$
\begin{pmatrix}
    0 & K & 0
\end{pmatrix}$ 
and 
$\begin{pmatrix}
    0 & 0 & K
\end{pmatrix}$, which have projective dimensions $2$, $1$ and $0$, respectively. 
Thus, $\rgldim{D} = 2$ by \cite[Cor.~11]{Auslander55}. 
It follows that $\rproj{D}$ has kernels.

Lastly, to see that $D$ is not left coherent 
consider the left ideal $I/L$ of $D$, where 
\[
I \deff 
    \begin{pmatrix}
        0 & 0 & N \\
        0 & 0 & K \\
        0 & 0 & 0
    \end{pmatrix}.
\]
There is a left $D$-module epimorphism 
$\alpha \colon D \onto I/L$
given by
\[
\alpha 
    \left(
    \begin{pmatrix}
        a & b & c \\
        0 & d & e \\
        0 & 0 & f
    \end{pmatrix}
    + L
    \right)
    \deff 
    \begin{pmatrix}
        a & b & c \\
        0 & d & e \\
        0 & 0 & f
    \end{pmatrix}
    \cdot
    \begin{pmatrix}
        0 & 0 & 0 \\
        0 & 0 & 1 \\
        0 & 0 & 0
    \end{pmatrix}
    + L
    =
    \begin{pmatrix}
        0 & 0 & 0 \\
        0 & 0 & d \\
        0 & 0 & 0
    \end{pmatrix}
    + L,
\]
which has kernel
\[
\Ker\alpha 
    =
    \left.
    \begin{pmatrix}
        K & N & N \\
        0 & 0 & K \\
        0 & 0 & K
    \end{pmatrix}
    \middle/ L
    \right..
\]
Moreover, notice that $\Ker\alpha$ is not a finitely generated left $D$-module because $N$ is not finitely generated as a left $K$-module. 
Thus, the short exact sequence 
\[
\begin{tikzcd}
0
    \arrow{r}
& \Ker\alpha
    \arrow{r}
& D
    \arrow{r}{\alpha}
& I/L
    \arrow{r}
& 0
\end{tikzcd}
\]
demonstrates that $I/L$ is a finitely generated left ideal of $D$ that cannot be finitely presented by \cite[Prop.~(4.26)(b)]{Lam99}. In particular, $D$ is not left coherent,  
which concludes this example.
\end{example}

We now turn our attention to higher homological algebra and prove a $d$-analogue of \cref{example:torsion-class} where $d\geq 1$ is an integer. That is, we will show that a $d$-torsion class of a suitable $d$-abelian category is $d$-exact (see \cref{def:d-exact-category}) and has $d$-kernels (see \cref{def:has-d-kernels}). 
We recall below only the definitions we will use explicitly.

A \emph{$d$-abelian} category is defined using the notions of $d$-(co)kernels and $d$-exact sequences (see Definitions~\ref{def:d-kernels} and \ref{def:d-exact-sequence}); see \cite[Def.~3.1]{Jasso-n-abelian-and-n-exact-categories}. In particular, each morphism in a $d$-abelian category has a $d$-kernel. 
Combining \cite[Thm.~3.16]{Jasso-n-abelian-and-n-exact-categories} and \cite[Cor.~1.3(ii)]{Kvamme2022}, we know that a skeletally small additive category is $d$-abelian if and only if it is $d$-cluster tilting in an abelian category.

\begin{defn}\label{def:d-torsion}
\cite[Def.~1.1]{Jorgensen-torsion-classes-and-t-structures-in-higher-homological-algebra} 
Suppose $\cat{C}$ is $d$-abelian. A full subcategory $\cat{U}\sse\cat{C}$ is a \emph{$d$-torsion class} if, for each $B\in\cat{C}$, there is a $d$-exact sequence
\[\begin{tikzcd}
U_{B}
    \arrow[tail]{r}{u_{d+1}}
&B 
    \arrow{r}{u_{d}}
&V_{d-1} 
    \arrow{r}{u_{d-1}}
&\cdots 
    \arrow[two heads]{r}{u_{1}}
&V_{0} 
\end{tikzcd}
\]
in $\cat{C}$, such that
$U_{B}\in\cat{U}$
and 
the sequence
\[
\begin{tikzcd}[column sep=1.8cm]
0
    \arrow{r}
&[-20pt] \restr{(\yoneda V_{d-1})}{\cat{U}}
    \arrow{r}{\restr{(\yoneda u_{d-1})}{\cat{U}}}
&\restr{(\yoneda V_{d-2})}{\cat{U}}
    \arrow{r}{\restr{(\yoneda u_{d-2})}{\cat{U}}}
&\cdots
    \arrow{r}{\restr{(\yoneda u_{1})}{\cat{U}}}
&\restr{(\yoneda V_{0})}{\cat{U}}
    \arrow{r}
&[-20pt]0
\end{tikzcd}
\]
is exact in $\rMod{\cat{U}}$.
\end{defn}

\begin{rem}\label{rem:monic-right-approx}
In \cref{def:d-torsion}, notice that the morphism 
$u_{d+1}\colon U_{B} \rightarrowtail B$ 
is a monic right $\cat{U}$-ap\-prox\-i\-ma\-tion of $B$ by \cite[Lem.~2.7(i)(b)]{Jorgensen-torsion-classes-and-t-structures-in-higher-homological-algebra}. This fact is fundamental for the next result.
\end{rem}

\begin{prop}\label{prop:d-torsion-has-d-kernels}
Any $d$-torsion class $\cat{U}$ in a $d$-abelian category $\cat{C}$ has $d$-kernels.
\end{prop}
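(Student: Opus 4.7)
The plan is to transport a $d$-kernel from the ambient $d$-abelian category $\cat{C}$ into $\cat{U}$ via monic right $\cat{U}$-approximations, whose existence is provided by \cref{rem:monic-right-approx}. Let $\partial_1 \colon B_1 \to B_0$ be a morphism in $\cat{U}$. First I would use that $\cat{C}$ is $d$-abelian to obtain a $d$-kernel
\[
\begin{tikzcd}
C_{d+1} \arrow[tail]{r}{\partial^C_{d+1}} & C_d \arrow{r} & \cdots \arrow{r} & C_2 \arrow{r}{\partial^C_2} & B_1 \arrow{r}{\partial_1} & B_0
\end{tikzcd}
\]
in $\cat{C}$, and for each $i \in \{2, \ldots, d+1\}$ I would choose a monic right $\cat{U}$-approximation $u_i \colon U_i \rightarrowtail C_i$.

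Next I would build morphisms $\partial_i^U \colon U_i \to U_{i-1}$ inductively, taking $U_1 \deff B_1$, $\partial_2^U \deff \partial_2^C u_2$, and for $i \geq 3$ defining $\partial_i^U$ as a lift of the composite $\partial_i^C u_i \colon U_i \to C_{i-1}$ through $u_{i-1}$. Such a lift exists because $u_{i-1}$ is a right $\cat{U}$-approximation and $U_i$ lies in $\cat{U}$. The defining squares $u_{i-1}\partial_i^U = \partial_i^C u_i$, combined with the fact that each $u_j$ is monic, then show that these morphisms assemble into a complex in $\cat{U}$ ending in $\partial_1$.

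The final task is to verify that the induced sequence on representables in $\rMod{\cat{U}}$ is exact (with a leading zero). The key observation is that for any $X \in \cat{U}$, post-composition with $u_i$ yields an isomorphism $\cat{U}(X, U_i) \xrightarrow{\sim} \cat{C}(X, C_i)$: surjectivity is the approximation property, while injectivity follows from $u_i$ being monic. The defining squares above make these isomorphisms compatible with the differentials, so the representable sequence in $\rMod{\cat{U}}$ arising from $(\partial^U_{d+1}, \ldots, \partial^U_2)$ is identified naturally with the representable sequence in $\rMod{\cat{C}}$ arising from the original $d$-kernel, the latter being exact by construction. I expect the main obstacle to be the careful bookkeeping needed to confirm both that the lifts assemble into a complex and that the naturality squares for the comparison isomorphisms commute; both issues are controlled entirely by the defining squares of the $u_i$'s together with the fact that each monic right $\cat{U}$-approximation is a monomorphism.
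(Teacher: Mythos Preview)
Your proposal is correct and follows the same approach as the paper: take a $d$-kernel in $\cat{C}$, replace each $C_i$ by a monic right $\cat{U}$-approximation $u_i\colon U_i\rightarrowtail C_i$, and lift the differentials through these approximations. The only difference is in how the final exactness is verified: the paper carries out an explicit weak-kernel diagram chase for each $i$, whereas you package the same computation as the single observation that $(u_i)_*\colon \cat{U}(X,U_i)\to \cat{C}(X,C_i)$ is a bijection for every $X\in\cat{U}$, giving a termwise isomorphism of complexes compatible with the differentials. Your formulation is arguably cleaner, but the content is identical; just be precise that what you are comparing the $\cat{U}$-representable sequence to is the \emph{restriction to $\cat{U}$} of the $\cat{C}$-representable sequence (exact because the $d$-kernel property yields exactness at every object of $\cat{C}$, hence in particular at every $X\in\cat{U}$).
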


\begin{proof}
    Let $f_{1}\colon U_{1}\to U_{0}$ be an arbitrary morphism in $\cat{U}$. 
    Since $\cat{C}$ is $d$-abelian, we know $f_{1}$ has a $d$-kernel in $\cat{C}$, giving rise to a left $d$-exact sequence
    \[
    \begin{tikzcd}
        B_{d+1}
            \arrow[tail]{r}{f_{d+1}}
        & B^d
            \arrow{r}{f_{d}}
        & \cdots
            \arrow{r}{f_{3}}
        & B_{2}
            \arrow{r}{f_{2}}
        & U_{1}
            \arrow{r}{f_{1}}
        & U_{0}.
    \end{tikzcd}
    \]
    By \cref{rem:monic-right-approx}, we may take 
    a monic right $\cat{U}$-approximation 
    $a_{i}\colon U_{i}\to B_{i}$ of $B_{i}$ for each $2\leq i \leq d+1$. 
    For each $i=3,\ldots,d+1$, we have the morphism 
    $f_{i}a_{i}\colon U_{i} \to B_{i-1}$, and so there exists 
    $g_{i}\colon U_{i} \to U_{i-1}$ 
    such that $a_{i-1}g_{i} = f_{i}a_{i}$. 
    This gives rise to the commutative diagram 
    \begin{equation}\label{eqn:approx-diagram}
    \begin{tikzcd}
        0
            \arrow{r}
        & U_{d+1}
            \arrow[]{r}{g_{d+1}}
            \arrow[tail]{d}{a_{d+1}}
        & U_{d}
            \arrow[]{r}{g_{d}}
            \arrow[tail]{d}{a_{d}}
        & \cdots
            \arrow[]{r}{g_{3}}
        & U_{2}
            \arrow{r}{g_{2}}
            \arrow[tail]{d}{a_{2}}
        & U_{1}
            \arrow{r}{g_{1}}
            \arrow[equals]{d}{a_{1}}
        & U_{0}
            \arrow[equals]{d}
        \\
        0 
            \arrow{r}
        & B_{d+1}
            \arrow[tail]{r}{f_{d+1}}
        & B_{d}
            \arrow{r}{f_{d}}
        & \cdots
            \arrow{r}{f_{3}}
        & B_{2}
            \arrow{r}{f_{2}}
        & U_{1}
            \arrow{r}{f_{1}}
        & U_{0},
    \end{tikzcd}
    \end{equation}
    where $g_{2}\deff f_{2}a_{2}$, $g_{1}\deff f_{1}$ and $a_{1} \deff \id{U_{1}}$. 

    We claim that the top row of \eqref{eqn:approx-diagram} is a left $d$-exact sequence in $\cat{U}$. First note that $g_{d+1}$ is indeed a monomorphism because $a_{d}g_{d+1} = f_{d+1}a_{d+1}$ is monic. Thus, it remains to show that $g_{i+1}$ is a weak kernel of $g_{i}$ for each $1\leq i \leq d$. 
    Fix $i\in\set{1,\ldots, d}$ and suppose $h\colon U \to U_{i}$ is now a morphism in $\cat{U}$ with $g_{i}h = 0$. Then 
    $f_{i}a_{i}h = a_{i-1}g_{i}h = 0$
    implies there exists $b\colon U \to B_{i+1}$ such that $a_{i}h = f_{i+1}b$. Since $a_{i+1} \colon U_{i+1} \to B_{i+1}$ is a right $\cat{U}$-approximation, there exists $c\colon U \to U_{i+1}$ with $b = a_{i+1}c$. In particular, we have 
    $
    a_{i}h 
        = f_{i+1}b 
        = f_{i+1}a_{i+1}c 
        = a_{i}g_{i+1}c
    $, 
    so $h = g_{i+1}c$ as $a_{i}$ is monic, completing the proof.
\end{proof}

\begin{example}[$d$-torsion(-free) classes]\label{example:d-torsion-free-have-d-kernels}
We explain how, under reasonable assumptions, a $d$-torsion class $\cat{U}$ of a skeletally small, Krull-Schmidt, $d$-abelian category $\cat{C}$ meets the conditions required of the category $\cat{C}$ in \cref{setup:section4}. 
It is clear that $\cat{U}$ is skeletally small as $\cat{C}$ is, and $\cat{U}$ is idempotent complete by \cite[Lem.~2.7(iii)]{Jorgensen-torsion-classes-and-t-structures-in-higher-homological-algebra}. 
By \cite[Cor.~1.3(ii)]{Kvamme2022}, the $d$-abelian category $\cat{C}$ embeds in an abelian category $\cat{A}$ as a $d$-cluster tilting subcategory. If $\cat{A}$ is a finite length category, then $\cat{U}$ is closed under $d$-extensions and $d$-quotients by \cite[Prop.~3.11]{AugustHauglandJacobsenKvammePaluTreffinger}. 
(Actually, one need only assume $\cat{A}$ is noetherian or has arbitrary coproducts for then the existence of a smallest torsion class in $\cat{A}$ containing $\cat{U}$ is guaranteed; see \cite[Sec.~1.3]{pavon2023torsion}.) 
It follows from closure under $d$-quotients that $\cat{U}$ has $d$-cokernels and that they agree with $d$-cokernels taken in $\cat{C}$ (see \cite[Def.~3.7, Rem.~3.12]{AugustHauglandJacobsenKvammePaluTreffinger}). 
Furthermore, \cref{prop:d-torsion-has-d-kernels} shows that $\cat{U}$ has $d$-kernels.

It remains to show that $\cat{U}$ is $d$-exact. By \cite[Thm.~4.4]{Jasso-n-abelian-and-n-exact-categories}, we have that $\cat{C}$ is a $d$-exact category, and hence the class $\adj{E}$ of all $d$-exact sequences in $\cat{C}$ with all terms in $\cat{U}$ forms a $d$-exact structure on $\cat{U}$ by \cite[Cor.~4.15]{Klapproth-n-extension-closed}; this is also observed in \cite[Cor.~3.19]{AugustHauglandJacobsenKvammePaluTreffinger}. 
Thus, $(\cat{U},\adj{E})$ is a $d$-exact category with $d$-kernels and $d$-cokernels.

Dual statements hold for a $d$-torsion-free class in $\cat{C}$.
\end{example}

\begin{acknowledgements}
We thank Raphael Bennett-Tennenhaus for his time and several discussions during the development of
\cref{exam:new-projR}, and Dag Oskar Madsen for email communications and communicating the ring $T$ in 
\cref{exam:new-projR} to us. We would also like to thank the anonymous referee for comments on an earlier version of the paper, which in particular led to the development of Proposition 1.3.

The first author is supported by the EPSRC Programme Grant EP/W007509/1. The second and third authors are supported by a DNRF Chair from the Danish National Research Foundation (grant DNRF156), by a Research Project 2 from the Independent Research Fund Denmark (grant 1026-00050B), and by the Aarhus University Research Foundation (grant AUFF-F-2020-7-16).
\end{acknowledgements}

\end{document}